\newtheorem{theorem}{Theorem}
\newtheorem{lemma}[theorem]{Lemma}
\newtheorem{assumption}[theorem]{Assumption}
\newtheorem{definition}[theorem]{Definition}
\numberwithin{equation}{section}
\numberwithin{theorem}{section}
\newtheorem{remark}[theorem]{Remark}
\DeclareMathOperator{\red}{red}
\newcommand{\A}{\mathcal A}
\newcommand{\C}{\boldsymbol{C}}
\newcommand{\D}{\mathcal D}
\renewcommand{\d}{\boldsymbol r}
\newcommand{\E}{\mathbb E}
\renewcommand{\H}{\mathcal H}
\newcommand{\imax}{{i_\text{max}}}
\newcommand{\Iw}{\mathcal{I}}
\newcommand{\N}{\mathbb N}
\renewcommand{\P}{\mathbb P}
\newcommand{\R}{\mathbb R}
\renewcommand{\S}{\mathcal{S}}
\newcommand{\Uw}{\mathcal{U}}
\newcommand{\Fw}{\mathcal{F}}
\newcommand{\Vw}{\mathcal V}
\newcommand{\X}{\mathcal X}
\newcommand{\g}{\wt g}
\newcommand{\sse}{\subseteq}
\newcommand{\middlemid}{\; \middle\vert\; }
\newcommand{\bb}{\boldsymbol{\beta}}
\newcommand{\z}{\boldsymbol{z}}
\newcommand{\PiH}{\boldsymbol{\Pi}_\mathcal{H}}
\newcommand{\PiCh}{\boldsymbol{\Pi}_\mathcal{H}}
\newcommand{\PiX}{\boldsymbol{\Pi}_\mathcal{X}}
\newcommand{\oPiH}{ \boldsymbol{\Pi}_{\widehat{\mathcal{H}}} }
\newcommand{\wPiH}{\boldsymbol{\Pi}_{\widetilde{\mathcal{H}}}}
\newcommand{\wPiX}{\boldsymbol{\Pi}_{\widetilde{\mathcal{X}}}}
\newcommand{\eps}{\varepsilon}
\let\wt\widetilde
\let\qed\relax
\title{Risk-Sensitive Partially Observable Markov Decision Processes as Fully Observable Multivariate Utility Optimization problems}
\author{Arsham Afsardeir \thanks{Fakult\"at Elektrotechnik und Informatik, Technische Universit\"at Berlin,  Marchstr.~23, 10587, Berlin, Germany. Partially supported by DFG project OB 102/29-1. }  \and 
Andreas Kapetanis\thanks{Institute of Mathematics, Technische Universit\"at Berlin, Str. des 17. Juni 136, 10623 Berlin, Germany.}
\and
Vaios Laschos  \thanks{ WIAS Berlin. For the biggest part, V.L. was supported by DFG project OB 102/27-1. For the completion of the work, V.L. was supported by DFG  under Germany´s Excellence Strategy – The Berlin Mathematics
Research Center MATH+ (EXC-2046/1, project ID: 390685689).} \and
Klaus Obermayer \thanks{Fakult\"at Elektrotechnik und Informatik, Technische Universit\"at Berlin,  Marchstr.~23, 10587, Berlin, Germany. Partially supported by DFG project OB 102/27-1. }}
\date{\today}
\begin{document}

\maketitle

\begin{abstract}
    We provide a new algorithm for solving Risk Sensitive Partially Observable Markov Decisions Processes, when the risk is modeled by a utility function, and both the state space and the space of observations is finite. This algorithm is based on an observation that the change of measure and the subsequent introduction of the information space that is used for exponential utility functions, can be actually extended for sums of exponentials if one introduces an extra vector parameter that tracks the "expected accumulated cost" that corresponds to each exponential. Since every increasing function can be approximated by sums of exponentials in finite intervals,  the method can be essentially applied for any utility function, with its complexity depending on the number of exponentials.
\end{abstract}

\noindent
\textbf{\textit{Keywords: }}
	MDP, Partial Observability, Risk Sensitivity, Utility Function, Sums of Exponentials

%under a specific but extensive class of risk-sensitive optimization criterion which made by the linear combination of different exponential Utility functions regarding different running costs
\section{Introduction}

In the classical theory of Markov Decision Processes (MDPs), one deals with controlled Markovian stochastic processes $(S_n)$ taking values on a Borel space $\mathcal{S}$. These processes are controlled via a series of actions $(A_n)$, according to a policy $\pi$, that changes the underlying state transition probabilities $\mathrm{P}(S_{n+1}|S_n,A_n)$ of $(S_n)$. The goal is to find a policy $\pi$ that optimizes the expected value $$\Iw_{N}(s_0,\pi) = \mathbb{E}_{s_{0}}^{\pi}\left[\sum_{n=0}^{N-1}\beta^{n}C(S_{n},A_{n},S_{n+1})\right],$$
where $\beta\in(0,1]$ is called \textit{discount factor}, {$\C: \S \times \A \times \S \to \R$ is the cost function, $s_0 \in S_n$ is the initial state } and $N\in\mathbb{N}\cup\{\infty\}$  (the case where $\beta=1$ and $N=\infty$ at the same time, will be excluded in this work).  The inclusion of risk-sensitivity and partial observability are natural extensions to this standard model. \medskip

In classical MDPs, one makes the assumption that the controlled process $(S_n)$ takes values on a set of states $\mathcal{S}$ which is always accessible to the controller. However, in several real-life applications the real state is not directly observable and only secondary information dependent on the state, can be observed. Partially Observable Markov Decision Processes (POMDPs) are a generalization of MDPs towards incomplete information about the current state. POMDPs extend the notion of MDPs by a set of observations $\mathcal{Y}$ and a set of conditional observation probabilities $Q(\cdot|s)$ given the ``hidden'' state $s \in \S$. {$Q(y|s)$ namely represents the probability of observing $y$ while being in state $s$.} In \textbf{risk-neutral POMDPs}, one can introduce a new state space, called \textbf{belief (state) space} $\mathcal{X}=\mathcal{P}(\mathcal{S})$, the set of probability measures on $\S$, and a stochastic process  $(X_{n})$ taking values in $\mathcal{X}$, such that $X_{n}(s)$  is the probability of $S_{n}$ being equal to the  ``hidden'' state $s \in \mathcal{S}$ at time $n$, conditioned on the accumulated observations and actions up to time $n.$ One can treat this new process on the belief space like a Completely Observable Markov Decision Process (COMDP) on $\X$ with classical tools, retrieve optimal or $\eps$-optimal polices (i.e. policies with expected value at most $\eps$-far from the optimal value), and then apply them to the original problem. It is remarkable that, \textbf{due to the linearity} of the expectation operator, the belief state is a so-called \emph{sufficient statistic}. Broadly speaking, a sufficient statistic carries adequate information for the controller to make an optimal choice at a specific point in time. It also allows to separate the present cost from the future cost through a Bellman-style equation. For an introduction to sufficient statistics, we refer to \citep{hinderer1970}.\medskip

 To introduce risk-sensitivity we will work with the classical theory of expected utility \citep{jaquette1973,  bauerle2014}, where one tries to optimize
\begin{equation}\label{firsteq}\Iw_{N}(s_0,\pi) = \mathbb{E}_{s_0}^{\pi}\left[U\left[\sum_{n=0}^{N-1}\beta^{n}C(S_{n},A_{n},S_{n+1})\right]\right],\end{equation}
for some increasing and continuous function $U:\mathbb{R}\to\mathbb{R}$. Note that the exponential utility function $U = \exp$ generates a performance index that belongs to several models of risk at the same time, and it has been extensively studied in many different settings \citep{borkar2002, cavazos2010, chung1987, dimasi2007, dupuis2019exit, fleming1997, hernandez1996a, howard1972, levitt2001}. As mentioned above, \citep{bauerle2014} treated problems with optimality criteria of form \eqref{firsteq}. \medskip

To our best knowledge, there is only partial progress \citep{marecki2010, bauerle2017partially,bauerle2017a, fan2018, fernandez1997,baras1997,hernandez1999,cavazos2005} when it comes to combining risk-sensitivity and partial observability, and most articles study the specific case of the exponential utility function. When risk is involved extra or alternative information is necessary to make an optimal decision. In the case of expected utilities additional information  on the accumulated cost is necessary to make an optimal choice, even if the true state is known to the controller \citep{bauerle2014, marecki2010}. A common workaround to this problem is to assume that the controller is aware of the running cost through some mechanism. For example, the controller observes either the running cost directly \citep{marecki2010}, or a part of the whole process that is responsible for the cost \citep{bauerle2017partially}. \citet{fan2018} study cost functions that depend on both observable quantities and beliefs. In \citet{bauerle2017a}, a general approach for treating problems with risk measured by a  utility function is introduced. It is shown there, that one  can use  probability measures on the product space $X\times\R$ as state space. This way, the authors end up with an MDP on the state space $\mathcal{P}(X\times\R).$ As we will explain in the sequel, we take a different route that is computationally less demanding in several cases.

\subsection{Our Contribution}

It was shown in \citet{bauerle2017a}, that by using an exponential function as the utility function, the problem space would shrink dramatically . This proposition is incline to the concept of \emph{information vector} which Cavazos-Cadena and Hernandez-Hernandez discussed \citep{cavazos2005}. Both papers show, in two different conceptual systems, that this property of exponential utility function which transforms summation of costs to the product of their utilities, can be exploited to provide sufficient statistics for decision making in a much smaller representation. Consequently the model with exponential utility function has a computational advantage, however, it losses its generality due to the much narrower range of utility functions it can accept. \medskip

To extend the ideas related to exponential utility models, in this work we employ the idea of multi-variate optimization and show that by applying exponentials on a finite set of different running costs, we can use the information space approach in a more general way. More specifically, our multi-variate exponential utility model is able to reproduce more complex utility functions and at the same time benefit from simplicity of exponential utilities according to computation burden as well. In our method the exponential running costs are independent from each other, therefore, each term can represent an independent component of a target multivariate utility function. These components can be seen either as building blocks of a target function's formulation (in the case of utility functions equal to sum of exponential terms) or more generally as the elements of function approximator series (in case of approximating the target function).\medskip

In comparison to the purely exponential case, the utility functions we treat can be more behaviorally plausible as well. In behavioral economics and finance disciplines it is a common approach to assume people to use a mapping from objective values to subjective utilities and subsequently either apply maximization on the \textit{expected value} of the utilities \citep{von1947theory} or their other distributional properties \citep{tversky1992advances} \citep{al2008note}. Consequently, the shape of people’s utility function has a significant effect on their attitude toward risky choices. The shape of human utility function \citep{mosteller1951experimental, kalyanaram1995empirical} and the effect of contextual parameters on that, like amount of wealth \citep{markowitz1952utility} and emotions \citep{bertram2021subjective} has been investigated by different experimental paradigms. For a review see \citet{edwards1954theory}. In their influential work, \emph{ prospect theory},  Kahneman and Tversky proposed an S-shaped utility function which is risk-averse (concave) in gains and risk-seeker(convex) in losses \citep{kahneman1979interpretation, kahneman2013prospect}. They also addressed a set of experiments that confirm different risk tendencies in gain and loss situations. As one can expect, the flexibility of an exponential utility model is not sufficient to produce different risk tendencies between loosing and winning situations by a single function. In contrast, privileging the computational advantage of exponential utility functions, our risk-sensitive model can also address this phenomenon by exploiting either utility functions which are defined by linear combinations of multiple exponential forms (like \emph{sinh(.)}) or utility functions which can be approximated by linear combinations of exponential terms in a specific interval of values (like Sigmoid function, see section 4). Therefore, it becomes possible to shape functions which have both positive and negative second order derivatives on the distinct intervals of their domain simultaneously. Capturing the dissimilarity of risk tendencies among losses and gains is a major advantage of the multi-variate model which gives us more explanatory ability in respect to behavioral modeling in comparison to exponential utility function. \medskip

In what follows we argue that it is actually possible to apply similar arguments to treat utility functions that are sums of exponentials, i.e. utility functions of the form
\begin{equation}\label{sumexp}
\widehat{U}(t)=\sum_{i=1}^{i_{\max}}w^{i}e^{\lambda^{i}t}.\end{equation}

With slight abuse of notation, we observe that for a probability distribution $\theta_{0}$ on $\mathcal{S},$ we can write
\begin{equation*}
\widehat{\Iw}_{N}(\theta_{0},\widehat{\pi})=\widehat{\mathbb{E}}_{\theta_{0}}^{\widehat{\pi}}\left[\sum_{i=1}^{i_{\max}}w^{i}e^{\lambda^{i}\left(\sum_{n=0}^{N-1}\widehat{C}(S_{n},A_{n})\right)}\right]=\sum_{i=1}^{i_{\max}}\widehat{\mathbb{E}}_{\theta_{0}}^{\widehat{\pi}}\left[w^{i}e^{\lambda^{i}\left(\sum_{n=0}^{N-1}\widehat{C}(S_{n},A_{n})\right)}\right].
\end{equation*}  
Then, similar to \citet{cavazos2005}, by applying a change of measure argument, we identify a new state space $\mathcal{X}=\mathcal{P}(\mathcal{S})^{i_{\max}}\times \mathcal{Y},$ a controlled transition matrix $P(x'|x,a),$ and a collection of running costs $C^{i}:\mathcal{X}\times\mathcal{A}\times\mathcal{X}\to\mathbb{R},$ that depend on the next stage as well, such that for the resulting completely observable controlled processes $(X_{n})$. With $x_{0}=(\theta_{0},\dots,\theta_{0},y_{0})$ and a fixed but arbitrary $y_{0}\in \mathcal{Y}$, we have
\begin{equation}\label{aha}
\widehat{\Iw}_{N}(\theta_{0},\widehat{\pi})=\Iw_{N}(x_{0},\pi)=\mathbb{E}_{x_{0}}^{\pi}\left[\sum_{i=1}^{i_{\max}}w^{i}e^{\lambda^{i}\left(\sum_{n=0}^{N-1}C^{i}(X_{n},A_{n},X_{n+1})\right)}\right].
\end{equation}
Note that $(\pi, \mathbb{E})$ and $(\widehat{\pi},\widehat{\mathbb{E}})$ are connected in a straightforward manner, see Section \ref{sct:rspomdp}.\medskip

To establish our method we exploit the results of \citet{bauerle2014} and extend that model to introduce multivariate utility functions $\mathcal{U}:\mathbb{R}^{d}\rightarrow\mathbb{R},$ which are component-wise monotone and each variable corresponds to a different running cost. This gives rise to the following optimality equation:
%giving rise to the following optimality metric, {extending its application from \eqref{firsteq} }
\begin{equation}\label{multicrit}
\Iw_{N}(x,\pi):=\E_{x}^{\pi}\left[\Uw\left(\sum_{n=0}^{N-1}\boldsymbol{\beta}^{n}\cdot\C(X_{n},A_{n},X_{n+1})\right)\right].
\end{equation}
Note that here $C$ is a vector of different running costs and the dot product denotes the point-wise multiplication. Criteria like \eqref{multicrit} can arise when one is trying to solve a multi-objective task, with different running costs, each of the costs contributing in a different manner to a total cost. As an example, one can think of a policy maker allocating tax money to different public sectors (education, infrastructure, health, etc). For each of them, we get a different cost which can be the position in the global chart or any other comparison metric. However, the total utility, depends on how much each government prioritizes each of these aspects, something that is encoded in the choice of the utility function.  In a similar manner to \citet{bauerle2014}, we augment the space to keep track of each accumulated cost term. Furthermore we prove that the finite time discounted problem converges to the infinite time one, without the extra assumption demanding that the utility function is either convex or concave, appearing in \citet{bauerle2014}.\medskip
\begin{remark}
	For any two utility functions $\mathcal{U}_{1},\mathcal{U}_{2}$ that are $\eps$-close on some interval 
	$[N\min_{s,u}\widehat{C}(s,u), N\max_{s,u}\widehat{C}(s,u)],$  an $\eps$-optimal policy for  $\mathcal{U}_{1}$ is a $2\eps$-optimal policy for  $\mathcal{U}_{2}$ . Therefore, one can apply the method to solve RSPOMDPs with utility functions that can be \textit{approximated} by functions of the form \eqref{sumexp}. One can easily show that this includes all increasing real functions, by approximating $F(t)=U(log(t)),$ by a polynomial $P(t)=\sum_{i=1}^{k}w_{k}x^{k}$ in the interval $[\exp(N\min_{s,u}\widehat{C}(s,u)),\exp( N\max_{s,u}\widehat{C}(s,u))],$ and the composing with the exponential on both sides.
\end{remark}

\bigskip
The rest of the paper is structured as follows:
 In section 2 our multi-variate utility function and its mathematical modeling and construction are presented.  Next, in section 3 we show and prove solution methods for this model and the utility functions from section 2 in both finite and infinite horizon cases. In section 4, we provide an extended version of a famous POMDP, the tiger problem, as a numerical example to explain the model and compare it with the general model of Bäuerle and Rieder. And finally, we discuss about the computational advantage of our model.\\

\section{RSPOMDPs for sums of exponentials}\label{sct:rspomdp}

In this section, we consider risk sensitve POMDPs with a class of utility functions that can be written as weighted sums of exponentials.  We will show that it is possible to reformulate the problem in terms of a multi objective risk sensitive MDP with a new performance index that, in turn, can be treated with tools described in the next section. 

\subsection{The original setting}\label{original setting}

We start by describing the model for a risk sensitive POMDP, i.e. $(\mathcal{S},\mathcal{Y},\mathcal{A},\widehat{P},Q,\widehat{U})$.  $\mathcal{S},\mathcal{Y},\mathcal{A}$ will be three \textbf{finite} sets equipped with the discrete topology. In the sequel, $\mathcal{S}$ is the \textit{hidden} state space, $\mathcal{Y}$ the set of observations, and $\mathcal{A}$ the set of controls. For every $a\in \mathcal{A},$ we define a transition probability matrix $\widehat{P}(a)=\left[\widehat{P}(s'|s;a)\right]_{s,s'\in S}.$ Finally, we denote by $Q=[Q(y|s)]_{y\in \mathcal{Y}, s\in\mathcal{S}}$ the signal matrix and by $\widehat{C}:\mathcal{S}\times \mathcal{A}\rightarrow \mathbb{R}$ the cost function.\medskip

Now, for each $n\in\mathbb{N},$ let $\widehat{\mathcal{H}}_{n}$ be the set of histories up to time $n,$ where $\widehat{\mathcal{H}}_{0}=\mathcal{P}(S),$ and $\widehat{\mathcal{H}}_{n}= \widehat{\mathcal{H}}_{n-1}\times \mathcal{A}\times \mathcal{Y}.$ We denote by $\oPiH := \left\{ \widehat\pi =\left(\widehat{f}_0,\widehat{f}_{1},\dots\right) \middlemid \widehat{f}_n : \widehat{\H}_n \to A, n\in\mathbb{N} \right\}$ the set of deterministic polices that are functions of the history $\widehat{h}_{n}=(\theta,a_{0},y_{1},\dots,a_{n-1},y_{n})$ up to time $n.$ Given $\theta\in\mathcal{P}(S),$
and $\widehat{\pi}\in\oPiH,$ due to the Ionescu-Tulcea theorem, there exists a unique measure $\widehat{\mathbb{P}}^{\widehat{\pi}}_{\theta}$ on the Borel sets of
$\Omega:=\mathcal{S}\times(\mathcal{A}\times \mathcal{S}\times \mathcal{Y})^{\infty}$ that satisfies:
\begin{equation*}
	\widehat{\mathbb{P}}^{\widehat{\pi}}_{\theta}(s_{0},a_{0},s_{1},y_{1},a_{1},\dots,a_{n-1},s_{n},y_{n}):=\theta(s_{0})
	\prod_{k=0}^{n-1}\left(\widehat{P}\left(s_{k+1}\vert s_{k};\widehat{f}_{k}\left(\widehat{h}_{k}\right)\right)Q\left(y_{k+1}\vert s_{k+1}\right)\right),
\end{equation*}
 The corresponding expectation operator is denoted by $\widehat{\mathbb{E}}_{\theta}^{\widehat{\pi}}.$ Finally, for each $n\in\mathbb{N},$ we define the $\sigma$-fields $\widehat{\mathcal{F}}_{n}$, $\widehat{\mathcal G}_{n},$ by
\begin{equation*}
	\begin{split}
		\widehat{\mathcal{F}}_{n}:=\sigma\left(\left(A_{k},Y_{k+1}\right),\ k=0,1,\dots,n-1\right),\hspace{14pt}
		\widehat{\mathcal{G}}_{n}:=\sigma\left(S_{0},\left(A_{k},S_{k+1},Y_{k+1}\right),\ k=0,1,\dots,n-1\right).
	\end{split}
\end{equation*}
It is straightforward to see that the set of policies $\oPiH,$ contains exactly the elements $\big(\widehat{f}_{n}\big)_{n\in\mathbb{N}},$ where $\widehat{f}_{n}$ are $\widehat{\mathcal{F}}_{n}$-measurable functions from $\widehat{\mathcal{H}}_{n}$ to $\mathcal{A}.$

\subsection{Utility functions that are sums of exponentials}
Let $\{\lambda^{i}, i = 1,\dots,\imax\}\sse\mathbb{R}\setminus\{0\}$ be a finite collection of risk parameters, and $\{w^{i}, i=1,\dots,\imax\}\sse\mathbb{R}$ be a collection of weights. We define the utility function $\widehat{U}:\mathbb{R}\to\mathbb{R}$ by 
\begin{equation}\label{expsum}
	\widehat{U}(t):=\sum_{i=1}^{i_{\max}}w^{i}e^{\lambda^{i}t},
\end{equation}
and introduce the performance index
\begin{equation}\label{fistperformanceindex}
	\widehat{\Iw}_{N}(\theta_{0},\widehat{\pi})=\sum_{i=1}^{i_{\max}}w^{i}\widehat{\mathbb{E}}_{\theta_{0}}^{\widehat{\pi}}\left[e^{\lambda^{i}\left[\sum_{n=0}^{N-1}\widehat{C}(S_{n},A_{n})\right]}\right],
\end{equation}
and the corresponding value fuction
\begin{equation}\tag{$\widehat{P}$}
\widehat{V}_{N}(\theta_{0}) :=	\inf_{\widehat{\pi}\in\oPiH}\widehat{\Iw}_{N}(\theta_{0},\widehat{\pi}).
\end{equation}
The goal is to minimize $\widehat{\Iw}_{N}(\theta_{0},\widehat{\pi})$ over all policies  $\widehat{\pi}\in\oPiH.$ 
We want to show that we can instead work on the completely observable risk sensitive MDP on the space $\mathcal{X}$ with performance index
\begin{equation}\label{eqn:ObjectiveSumsOfExp}
	\Iw_{N}(x_{0},\pi)=\sum_{i=1}^{i_{\max}}w^{i}\mathbb{E}_{x_{0}}^{\pi}\left[e^{\lambda^{i}\left[\sum_{n=0}^{N-1}C^{i}(X_{n},A_{n},X_{n+1})\right]}\right],
\end{equation}
for some reconstructed cost functions $C^i$ ,measure and expectation operator $\mathbb{P}^{{\pi}}_{\theta}$, $\mathbb{E}^{{\pi}}_{\theta}$
and corresponding value function
\begin{equation}\tag{$P$}
\Vw_{N}(x) := \inf_{\pi\in\PiH}\Iw_{N}(x,\pi),
\end{equation}
for set of histories $\mathcal{H}$. The following subsection will establish these constructions and prove the claim. Now problem $(P)$ falls in the framework of Section \ref{scn:model} that provides the means to calculate the optimal value and optimal policies as presented in the next chapter.

\subsection{Towards a completely observable problem}
The aim of this subsection is to prove the following:

\begin{theorem}
    Let a risk sensitive POMDP $(\mathcal{S},\mathcal{Y},\mathcal{A},\widehat{P},Q,\widehat{U}),$ with stochastic dynamics as in subsection \ref{original setting} and performance index given in \eqref{fistperformanceindex}. Then, there exist operators $G^{i}:\mathbb{P}(\mathcal{S}) \times \mathcal{A} \times \mathcal{Y} \to \mathbb{R}$, $F^{i}: \mathbb{P}(S) \times A \times Y \to \mathbb{P}(\mathcal{S}),\hspace{4pt} i\in \{1,\dots,i_{\max}\}$, and
    $\eta:\oPiH\rightarrow \PiH,$ such that the following Completely Observable MDP $X_{n}$ with performance index $\Iw_{N}(x_{0},\pi)$ is equivalent the original, i.e. 

$$\mathcal{I}_{N}(x_{0},\eta(\widehat{\pi}))=\widehat{\mathcal{I}_N}(\theta_{0},\widehat{\pi}).$$
The completely observable MDP is defined by the following:
\begin{enumerate}
    \item The state space is $\mathcal{X}=\mathbb{P}(S)^{i_{\max}}\times\mathcal{Y}$ and the set of actions is $\mathcal{A}$
    \item The evolution of the information state $\theta_n^i$ is given by:
    $$\theta_{n+1}^i=F^i(\theta_n,A_n,Y_{n+1})$$ and under the assumption that $Y_n$ are uniformly distributed on the set $\mathcal{Y}$ we arrive at the following transition rule for $x=(\theta^{1},\dots,\theta^{i_{\max}},y)\in \mathcal{X} ,a\in \mathcal{A},$:
    \begin{equation*}
    	P(x'|x;a):=\begin{cases}\frac{1}{|\mathcal{Y}|}, &  \text{if}\hspace{8pt} x'=(F^{1}(\theta^{1},a,y'),\dots,F^{i_{\max}}(\theta^{i_{\max}},a,y'),y'), \\ 0, &\text{otherwise}.\end{cases}
    \end{equation*}
    \item The cost functions are given by: $C: \mathcal{X} \times \mathcal{A}\times \mathcal{X} \to \mathbb{R}$, with:
    $$C^{i}(x,u,x')=G^{i}(\theta^{i},u,y') + \log(|\mathcal Y|^{\frac{1}{\lambda^i}})$$
    
    \item The set of histories is given by $\mathcal{H}_{0}=\mathcal{X},$ and $\mathcal{H}_{n}=\mathcal{H}_{n-1}\times \mathcal{A} \times \mathcal{X}$. A policy $\pi\in\PiH$ takes the form $\pi=(f_0,\dots,f_{n},\dots)$, where $f_{n} : \mathcal{H}_{n} \to \mathcal{A}$.

    \item The optimization problem is governed by the performance index: 
    \begin{equation*}
    	\mathcal{I}_{N}(x_{0},\pi)=\sum_{i=1}^{i_{\max}}w^{i}\text{sign}(\lambda^{i})\mathbb{E}_{x_{0}}^{\pi}\left[e^{\lambda^{i}\left[\sum_{n=0}^{N-1}C^{i}(X_{n},A_{n},X_{n+1})\right]}\right],
    \end{equation*}
\end{enumerate}
We remark that although $\eta$ is not a bijection, it essentially behaves like one in the same way as the one explained in Section \ref{assasin}.
\end{theorem}
% \begin{theorem}
% We have:
% $$\widehat{\mathcal{I}_N}(\theta_{0},\widehat{\pi})=\mathcal{I}_{N}(x_{0},\eta(\widehat{\pi})),$$
% where 
% \begin{equation*}
%     	\mathcal{I}_{N}(x_{0},\pi)=\sum_{i=1}^{i_{\max}}w^{i}\text{sign}(\lambda^{i})\mathbb{E}_{x_{0}}^{\pi}\left[e^{\lambda^{i}\left[\sum_{n=0}^{N-1}C^{i}(X_{n},A_{n},X_{n+1})\right]}\right]
% \end{equation*}
% marks a completely observable optimization problem for appropriate expectation $\mathbb{E}^{{\pi}}_{\theta}$ and cost functions $C^i$.
% \end{theorem}

\begin{proof}
The transformation to the completely observable problem will be done by introducing a sufficient statistic information space realized through information state vectors $\psi$ and $\theta$. The goal is to eliminate the unobservable quantities appearing in $$\widehat{\mathbb{E}}_{\theta_{0}}^{\widehat{\pi}}\left[e^{\lambda^{i}\left[\sum_{n=0}^{N-1}\widehat{C}(S_{n},A_{n})\right]}\right].$$ Towards this goal a new probability measure that eliminates the dependencies that hold the previous measure to the partially observable case is introduced.
Namely, following \cite{cavazos2005} and \cite{fleming1997} there exists a unique  probability measure $\mathbb{P}^{\widehat{\pi}}_{\theta}$  on $\widehat{\mathcal{G}}_{n}$ and its expectation operator $\mathbb{E}^{\widehat{\pi}}_{\theta}$ given by: 

\begin{equation*}
	{\mathbb{P}}^{\widehat{\pi}}_{\theta}\left(s_{0},a_{0},s_{1},y_{1},a_{1},\dots,a_{n-1},s_{n},y_{n}\right):=\theta(s_{0})
	\prod_{k=0}^{n-1}\left(\frac{1}{|\mathcal{Y}|}\widehat{P}\left(s_{k+1}\vert s_{k};\widehat{f}_{k}(h_{k})\right)\right),
\end{equation*}

for some $\theta \in \mathbb{P}(S)$, $\widehat{\pi} \in \oPiH$.
Note that $\theta(s_0)=\theta_0$ in our current set-up.\\

In what follows a relationship between $\widehat{\mathbb{E}}_{\theta_{0}}^{\widehat{\pi}}$ and ${\mathbb{E}}_{\theta_{0}}^{\widehat{\pi}}$ is constructed, in order for the latter to replace the former in the optimization problem.\\

The first important milestone in that direction makes use of the Radon-Nikodym theorem: Namely it can be noted that on the $\sigma$-field $\widehat{\mathcal{G}}_{n},$ the Radon-Nikodyn derivative of $\widehat{P}^{\widehat \pi}_{\theta}$ with respect to $P^{\widehat \pi}_{\theta}$ is given by 

\begin{equation*}
	\frac{\partial\widehat{\mathbb{P}}^{\widehat{\pi}}_{\theta}}{\partial\mathbb{P}^{\widehat{\pi}}_{\theta}}\bigg|_{\widehat{\mathcal{G}}_{n}}=\prod_{k=0}^{n-1}\left(|\mathcal{Y}|Q(Y_{k+1}\vert S_{k+1})\right)=:Z_{n},
\end{equation*}
and therefore
\begin{equation}\label{okay}
	\widehat{\mathbb{E}}_{\theta_{0}}^{\widehat{\pi}}\left[e^{\lambda^{i}\left[\sum_{k=0}^{n}\widehat{C}(S_{k},A_{k})\right]}\right]=\mathbb{E}_{\theta_{0}}^{\widehat{\pi}}\left[e^{\lambda^{i}\left[\sum_{k=0}^{n}\widehat{C}(S_{k},A_{k})\right]}Z_{n}\right].
\end{equation}

After establishing the defining relationship between the two measures for the change of measure in equation \eqref{okay} we now focus on the right part of the equation.
This process will also lead to the construction of an information vector, on which the information space of the transformed MDP will be based, following the next steps:
\medskip
\\
\begin{enumerate}
\itemsep2em 

    \item First, let us define the positive and $\widehat{\mathcal{F}}_{n}$-measurable random variable $\psi^{i}_{n}$, by
\begin{equation*}
	\psi^{i}_{n}(s):=\mathbb{E}_{\theta_{0}}^{\widehat{\pi}}\left[\mathds{1}_{\{S_{n}=s\}}e^{\lambda^{i}\left[\sum_{k=0}^{n}\widehat{C}(S_{k},A_{k})\right]}Z_{n}\Big|\widehat{\mathcal{F}}_{n}\right].
\end{equation*}
$\psi_n$ is a vector $\in \mathbb{R}^{|\mathcal{S}|}$ for each $i \in \{1,...,i_{\max}\}$. Intuitively it can be understood, as \textit{(random) average accumulated cost} up to time $n$ of all outcomes that share the same observations and choices of controls leading to final state $S_n=s$, given the information observable to or controlled by the agent up to this time step.

\item Using $\psi_n$ ,the notation $\int\psi^{i}_{n}=\sum_{s\in\mathcal{S}}\psi_{n}^{i}(s),$ , the linearity of the expectation operator and the tower property of conditional expectation we can then rewrite the right side of \ref{okay}:
$$\mathbb{E}_{\theta_{0}}^{\widehat{\pi}}\left[e^{\lambda^{i}\left[\sum_{k=0}^{n}\widehat{C}(S_{k},A_{k})\right]}Z_{n}\right]=$$

\begin{equation}\label{telescopic}
	\mathbb{E}_{\theta_{0}}^{\widehat{\pi}}\left[\sum_{s\in\mathcal{S}}\mathbb{E}_{\theta_{0}}^{\widehat{\pi}}\left[\mathds{1}_{\{S_{n}=s\}}e^{\lambda^{i}\left[\sum_{k=0}^{n}\widehat{C}(S_{k},A_{k})\right]}Z_{n}\Big|\widehat{\mathcal{F}}_{n}\right]\right]=\mathbb{E}_{\theta_{0}}^{\widehat{\pi}}\left[\int\psi^{i}_n\right]= \mathbb{E}_{\theta_{0}}^{\widehat{\pi}}\left[\int\psi^{i}_{0} \prod^{n}_{k=1}\frac{\int\psi^{i}_k}{\int \psi^{i}_{k-1}}\right].
\end{equation}

\item Then note that setting $\psi^{i}_{0}=\theta_{0}$, $\psi_n^i$ satisfies the following recursion:
\begin{equation}\label{recursion}
	\psi^{i}_{n} = |\mathcal{Y}| M^{i}(A_{n-1},Y_{n})\psi^{i}_{n-1}
\end{equation}
for the matrix $M(a,y) \in \mathbb{R}^{|\mathcal{S}| \times |\mathcal{S}|}$ given by:
\begin{equation*}
	M^{i}(a,y)[s,s']:=\left(e^{\lambda^{i}\widehat{C}(s,a)}\widehat{P}(s'\vert s ; a)Q(y\vert s')\right)^{\intercal}.
\end{equation*}

Inserting the recursion \ref{recursion} in \ref{telescopic} yields:
\begin{equation}\label{inserting}
\mathbb{E}_{\theta_{0}}^{\widehat{\pi}}\left[\int\psi^{i}_n\right]=\mathbb{E}_{\theta_{0}}^{\widehat{\pi}}\left[|\mathcal{Y}|^{n}\int\psi^{i}_{0}\prod^{n}_{k=1}\int  \left[M^{i}(A_{k-1},Y_{k})\frac{\psi^{i}_{k-1}}{\int \psi^{i}_{k-1}}\right]\right].
\end{equation}

Note that the integral $\int  \left[M^{i}(A_{k-1},Y_{k}) \frac{\psi^{i}_{k-1}}{\int \psi^{i}_{k-1}}\right]$ is to be understood in the same way as the previously introduced notation for $\int\psi^{i}_n$. Also $\int\psi^{i}_{0}=1$ per definition.

\item Finally we normalize the information vector $\psi^i_n$ by introducing $\theta^i_n$, so that we arrive at an information state that is an element of  $\mathbb{P}(\mathcal{S})$:

\begin{equation*}
\theta^{i}_{n}=\frac{\psi^{i}_{n}}{\int \psi^{i}_{n}},
\end{equation*} 

Replacing $\psi_n^i$ with  $\theta_n^i$ in \ref{inserting} yields:
\begin{equation}\label{exp}
\mathbb{E}_{\theta_{0}}^{\widehat{\pi}}\left[\int\psi^{i}_n\right]\!{=}\mathbb{E}_{\theta_{0}}^{\widehat{\pi}}\left[\!|\mathcal{Y}|^{n}\prod^{n}_{k=1}\int  M^{i}(A_{k-1},Y_{k})\theta^{i}_{k-1}\!\right]{=}
   \mathbb{E}_{\theta_{0}}^{\widehat{\pi}}\left[\!e^{\lambda^{i}\left(\sum^{n}_{k=1}\left(\frac{1}{\lambda^{i}}\log\left(\int [M^{i}(A_{k-1},Y_{k})\theta^{i}_{k-1}]\right)+\log\mathcal|{Y}|^{\frac{1}{\lambda^{i}}}\!\right)\!\right)}\!\right], 
\end{equation}

where in the last step we have used the properties of the exponential and logarithmic functions in order to rewrite the operation in a more suitable form.
\end{enumerate}

\bigskip

In steps 1-4 we have thus achieved our goal of rewriting \ref{okay} only using quantities known to the agent. Namely we have shown:
\begin{equation*}
    	\widehat{\mathbb{E}}_{\theta_{0}}^{\widehat{\pi}}\left[e^{\lambda^{i}\left[\sum_{k=0}^{n}\widehat{R}(S_{k},A_{k})\right]}\right] = \mathbb{E}_{\theta_{0}}^{\widehat{\pi}}\left[e^{\lambda^{i}\left(\sum^{n}_{k=1}\left(\frac{1}{\lambda^{i}}\log\left(\int [M^{i}(A_{k-1},Y_{k})\theta^{i}_{k-1}]\right)+\log\mathcal|{Y}|^{\frac{1}{\lambda^{i}}}\right)\right)}\right] 
\end{equation*}
In addition we have constructed an information state $\theta_n^i \in \mathbb{P}(\mathcal{S})$ for the transformed MDP.

\bigskip
% As a last step we still need to introduce some operators that will simplify and ease the notation in the transformed model and properly define the new optimization problem.

As a last step we introduce the notation for the new optimization problem that leads to the direct claim of this theorem. First consider $G:\mathbb{P}(\mathcal{S}) \times \mathcal{A} \times \mathcal{Y} \to \mathbb{R}$, given by:

\begin{equation*}
  G^{i}\left(\theta^{i},a,y\right):=\frac{1}{\lambda^{i}}\log\left(\int M^{i}(u,y)\theta^{i}\right).  
\end{equation*}
This lets us rewrite \ref{exp}:
\begin{equation*}
    \mathbb{E}_{\theta_{0}}^{\widehat{\pi}}\left[e^{\lambda^{i}\left(\sum^{n}_{k=1}\left(\frac{1}{\lambda^{i}}\log\left(\int [M^{i}(A_{k-1},Y_{k})\theta^{i}_{k-1}]\right)+\log\mathcal|{Y}|^{\frac{1}{\lambda^i}}\right)\right)}\right] =     
    \mathbb{E}_{\theta_{0}}^{\widehat{\pi}}\left[e^{\lambda^{i}\left(\sum^{n}_{k=1}\left(G(\theta_k^i, A_{k-1}, Y_k) + \log\mathcal|{Y}|^{\frac{1}{\lambda^i}}\right)\right)} \right]
\end{equation*}

\bigskip

Furthermore we use use \ref{recursion} on $\theta_n^i$:
\begin{equation*}
    \theta^{i}_{n}=\frac{\psi^{i}_{n}}{\int \psi^{i}_{n}}= \frac{|\mathcal{Y}| M^{i}(A_{n-1},Y_{n})\psi^{i}_{n-1}}{\int|\mathcal{Y}| M^{i}(A_{n-1},Y_{n})\psi^{i}_{n-1}} =  \frac{ M^{i}(A_{n-1},Y_{n})\theta^{i}_{n-1}}{\int M^{i}(A_{n-1},Y_{n})\theta^{i}_{n-1}}
\end{equation*}
We this recursion we can now write $F: \mathbb{P}(S) \times A \times Y \to \mathbb{P}(\mathcal{S})$ as a forward operation for the information state:
\begin{equation*}
    F^{i}\left(\theta^{i},a,y\right):=\frac{M^{i}(a,y) \theta^{i}}{\int  M^{i}(a,y)\theta^{i}}.
\end{equation*}
For the reformulation of the MDP we can now use the cost functions: $C: \mathcal{X} \times \mathcal{A}\times \mathcal{X} \to \mathbb{R}$, with:
    $$C^{i}(x,u,x')=G^{i}(\theta^{i},u,y') + \log(|\mathcal Y|^{\frac{1}{\lambda^i}})$$
Furthermore, for the policies we have  $\eta:\oPiH\rightarrow \PiH$ such that $(f_{n})_{n\in \{0,\dots,N-1\}}=\eta((\widehat{f}_{n})_{n\in \{0,\dots,N-1\}})$ satisfies
    \begin{equation*} 
    	f_n(x_{0},a_{0},\dots,x_{n-1},a_{n-1},x_{n})=\widehat{f}_{n}(\theta_{0},a_{0},\dots,y_{n-1},a_{n-1},y_{n}).
    \end{equation*}
\end{proof}
%%%%%%%%%%%
%%%%%%%%%%
%%%%%%%%%%
\bigskip
\bigskip

\section{MDP with Multivariate Utility Function}

In this section, we describe a model for  risk sensitive multi-objective sequential decision making on a Borel state and action space with multiple costs and a multivariate utility function. The performance index is the expected multivariate utility, where each variable corresponds to a different running cost. As a generalization to the classical MDP model, we allow for the cost to depend on the subsequent state in addition to the current state-action pair. We thereby follow and extend ideas from \citet{bauerle2014} and \citet{hernandez1996b}.

\subsection{Notation and Assumptions} \label{scn:model}

Throughout this section, we assume that an $N$-step Markov Decision Process is given by a Borel state space $\X$, a Borel action space $\A$, a Borel set $\D \sse \X \times \A$, and a regular conditional distribution $\mathrm{P}$ from $\X \times \D$ to $[0,1]$. Given the current state $x \in \X$, we assume that an action $a \in D(x)$ may be chosen, where $D(x) := \{ a \in \A \mid (x,a) \in \D \}$ is the set of feasible actions. The transition probability to the next state is then given by the distribution $\mathrm{P}(\cdot | x ; a)$, according to the chosen action. The set of histories from up to time $n$ is defined by
\begin{equation*}
	\H_{0} := \X, \hspace{32pt}
	\H_{n} := \H_{n-1} \times \A \times \X, \quad n\in\mathbb{N}
\end{equation*}
and $h_{n} = (x_{0},a_{0},\dots,x_{n}) \in \H_{n}$ is a historical outcome  up to time $n.$

\begin{definition} The set of (history-dependent) policies is defined by
	$$\PiH:= \left\{ \pi = (f_{0},f_{1},\dots) \middlemid f_{n} : \H_{n} \to \mathcal{A},\quad \forall h_{n} \in \H_{n} : f_{n}(h_{n}) \in D(x_n),\quad n\in\mathbb{N} \right\}.$$ 
	Similarly, the set of Markovian policies is defined by
	$$\PiX := \left\{ \pi = (g_{0},g_{1},\dots) \middlemid g_{n} : \X \to \mathcal{A}, \quad\forall x \in \X : g_{n}(x) \in D(x),\quad n\in\mathbb{N} \right\}.$$
\end{definition}

Given an initial state $x \in \X$ and a history-dependent policy $\pi = (f_1, f_2, \dots) \in \PiCh$, due to the Ionescu-Tulcea theorem, there exists a probability measure $\P_{x}^\pi$ on $\H_{\infty}$ and two stochastic processes $(X_n)_{n}$, $(A_n)_{n}$ such that
$$\P_{x}^\pi(X_{0} \in B) = \delta_x(B),\ \P_{x}^\pi(X_{n+1} \in B \mid H_n, A_n) = \mathrm P(X_{n+1} \in B \mid X_n, A_n)$$
and
$$A_n = f_n(H_n)$$
for all Borel sets $B \sse \X$. Canonically, $H_n$, $X_n$, $A_n$ are the history, state and action at time $n$.  By $\E_x^\pi$  we denote the expectation operator corresponding to $\P_x^\pi.$ For more details of this construction, we refer to \citet{bauerle2014}.\\

Throughout the whole section, we have the following standing assumptions:

\begin{assumption}\label{assumption} ~
	\begin{enumerate}
		\item The utility function $\Uw: \mathbb{R}^{i_{\max}} \to \R$ is continuous, and it exists $\hspace{2pt}0\leq i_{\tau}\leq i_{\max}\hspace{2pt},$ such that $\Uw$ is component-wise increasing in $\{i<i_{\tau}\}$ and  component-wise decreasing in $\{i>i_{\tau}\}.$
		\item  $\|\mathcal{U}\|_{\infty}<\infty.$ 
		\item The sets $D(x), x \in \X$ are compact.
		\item The map $x \mapsto D(x)$ is upper semi-continuous, i.e. if $x_n \to x \in X$ and $a_n \in D(x_n)$, then $(a_n)$ has an accumulation point in $D(x)$.
		\item The maps $(x,a,x') \mapsto C^{i}(x,a,x'),\ i=1,\dots,\imax,$ are continuous, and it holds $\underline{c} \leq C^i(\cdot, \cdot, \cdot) \leq \bar c$ for some fixed $\underline c, \bar c \in \mathbb{R}$.
		\item $\mathrm{P}$ is weakly continuous.
	\end{enumerate}
\end{assumption}

\begin{remark}
Due to uniform boundedness of the cost functions $C^{i},$ and the fact that we work on finite horizon problems with $N<\infty$ or infinite horizon problems with discount, we can observe that assumption \textit{2} can be removed without any loss of generality.
\end{remark}

For notational convenience, we define the vector valued function $\C: \X \times \A \times \X \to \R^\imax$ by \begin{equation*}
\C(x,a,x') := \left(C^1(x,a,x'), \dots, C^\imax(x,a,x')\right).\end{equation*}

\subsection{Finite Horizon Problems}
\subsubsection{Performance Index}
After we have set the stage for Markov Decision Processes and their policies, we can now define a performance index that is the expected utility of several running costs.

\begin{definition}
	Denote by $N$ the number of steps of the MDP. We define the \emph{total cost} $\Iw_N(x,\pi)$ given an initial state $x \in \mathcal{X},$ and a history dependent policy $\pi \in \PiCh$ by
	$$\Iw_{N}(x,\pi):=\E_{x}^{\pi}\left[\Uw\left(\sum_{n=0}^{N-1}\C(X_{n},A_{n},X_{n+1})\right)\right],$$
	and the corresponding value function by
	\begin{equation} \label{PTotal} \tag{$P$}
	\Vw_{N}(x) := \inf_{\pi\in\PiH}\Iw_{N}(x,\pi).
	\end{equation}		
\end{definition}

\subsubsection{Augmented problem}
The aim of what follows is to determine $\mathcal{V}_N,$ and optimal policies in \eqref{PTotal}. To this end, we augment the state space of the MDP to $\X \times \R^\imax$. The second component models the so-far accumulated cost of the advancing MDP. In particular, $\widetilde{X}_n:=(X_{n},\mathcal{R}_{n}) \in \X \times \R^\imax$ taking the value $(x, \d) = (x, r^1, \dots, r^\imax)$ implies that the MDP has advanced to state $x$ and accumulated a cost amounting to $r^i$ in the $i$-th objective after the first $n$ steps. In order to define transition probabilities of the augmented problem, we introduce the notion of a pushforward measure.

\begin{definition}
	\label{push} Given measurable spaces $(\mathcal{S},\mathcal{F})$, $(\widetilde{\mathcal{S}},\widetilde{\mathcal{F}})$, a measurable mapping $\mathcal{T}:\mathcal{S}\rightarrow\widetilde{\mathcal{S}}$ and a
	measure $\mu:\mathcal{F}\rightarrow\lbrack0,\infty]$, the pushforward of $\mu$
	is the measure induced on $(\widetilde{\mathcal{S}},\widetilde{\mathcal{F}})$ by $\mu$ under
	$\mathcal{T}$, i.e., the measure $\mathcal{T}_{\#}\mu:\widetilde{\mathcal{F}}\rightarrow
	\lbrack0,\infty]$ is given by%
	\[
	(\mathcal{T}_{\#}\mu)(B)=\mu\left(  \mathcal{T}^{-1}(B)\right)  \mbox{ for }B\in\widetilde
	{\mathcal{F}}.
	\]
\end{definition}

In particular, if a function $f$ is $\wt{\mathcal{F}}$-measurable and $\mathcal{T}_\# \mu$-integrable, and $f \circ \mathcal{T}$ is $\mu$-integrable, then
$$\int f\ d\mathcal{T}_\# \mu = \int f \circ \mathcal{T}\ d\mu.$$

Now, we define the transition kernel $\widetilde P$ of the augmented problem by
\begin{equation}
	\widetilde{\mathrm{P}}(\cdot|\widetilde{x};a)
	=\widetilde{\mathrm{P}}(\cdot|(x,\d);a)
	=(\mathcal{T}_{(x,\d)})_{\#}\mathrm{P}(\cdot|x,a),
\end{equation}
where 
\begin{equation}
\mathcal{T}_{(x,\d)}(x')=(x',\mathbf{C}(x,a,x')+\d).
\end{equation}
If $\mathcal{X}$ is finite, this leads to
\begin{equation}
	\widetilde{\mathrm{P}}(\widetilde{x}'|\widetilde{x};a)=
	\begin{cases}
		\mathrm{P}(x'|x;a), & \text{if } \widetilde{x}=(x,\d), \widetilde{x}' = (x',\d+\C(x,a,x')), \\
		0, & \text{otherwise}.
\end{cases}
\end{equation}
The histories for the augmented MDP are given by
\begin{equation*}
\widetilde{\mathcal{H}}_{0} := \X \times \R^\imax, \hspace{16pt}
\widetilde{\mathcal{H}}_{n} := \widetilde{\mathcal{H}}_{n-1} \times \D \times \left( \X \times \R^\imax \right), \quad n\in\mathbb{N}.
\end{equation*}
The definition of history-dependent policies $\widetilde \pi \in \wPiH$, Markovian policies $\widetilde \pi \in \wPiX$, and the corresponding decision rules are changed accordingly. \\

Similar to the previous section, there exist a probability measure $\wt \P_{x}^\pi$ on $\wt \H_{\infty}$ and a coupled stochastic process $(\wt X)_{n\in\N}$ with $\wt X_n= (X_n, R_n)$, and a stochastic process $(A_n)_{n \in \N}$, such that
% two stochastic processes $(\wt X_n)_{n=0,\dots,N \color{red} n \in \N}$, $(\wt D_n)_{n=0,\dots,N-1 \color{red} n \in \N}$ such that
$$
	\wt \P_{\wt x}^{\wt \pi} (\wt X_{0} \in B) = \delta_{\wt x}(B), \quad
	\wt \P_{\wt x}^{\wt \pi} (\wt X_{n+1} \in B \mid \wt H_n, A_n) = \widetilde{\mathrm{P}}(\wt X_{n+1} \in B \mid \wt X_n, A_n)$$
and
$$A_n = \wt f_n(\wt H_n)$$
for all Borel sets $B \sse \X$, and $H_n$, $X_n$, $A_n$ are the history, state and action at time $n$, given an initial state $\wt x \in \X \times \R^\imax$ and a history-dependent policy $\wt \pi \in \wPiH$. By induction, it is easy to prove that $\mathcal{R}_{n}=\sum_{k=0}^{n-1}\C(X_{k},A_{k},X_{k+1})+\d,$
$\widetilde{\mathbb{P}}^{\mathcal{\widetilde{\pi}}}_{\widetilde{x}}$-almost surely.

\begin{definition}
	Denote by $N\in\mathbb{N}$ the number of steps of the MDP. For $n=1,\dots,N$, we define the \emph{total cost} $\wt \Iw_n((x,\d), \widetilde\pi)$ for the augmented problem, given the initial state $x\in \X$, initial cost $\d \in \R^\imax$, and policy $\widetilde\pi \in \widetilde{\Pi}$ by
	\begin{equation}\label{ara}
		\widetilde{\Iw}_n(\widetilde{x},\widetilde{\pi})=\widetilde{\Iw}_n((x,\boldsymbol{r}),\widetilde{\pi}):=
		\widetilde{\E}^{\widetilde{\pi}}_{\widetilde{x}} \left[
			\Uw \left(
				\sum_{k=0}^{n-1}\C(X_{k},A_{k},X_{k+1})+\d
			\right)
		\right]=\widetilde{\E}_{\widetilde{x}}^{\widetilde{\pi}} \left[
			\Uw \left(R_n) \right)
		\right],
	\end{equation}
	and the corresponding value function by
	\begin{equation} \label{PCostToGo} \tag{$\widetilde P$}
	\widetilde{\mathcal{V}}_{N}(\widetilde{x}) := \inf_{\widetilde{\pi}\in\wPiH} \widetilde{\Iw}_{N}(\widetilde{x},\widetilde{\pi}).
	\end{equation}
\end{definition}

\subsubsection{Policy bijection} \label{assasin}
For the sequel, let $ x\in\mathcal{X},$ and $\widetilde{x}=(x,\mathbf{0}).$ Note that policies $\pi = (f_{0},f_{1},\dots)\in\PiCh$ of the original problem consist of functions $f_{n}$ that are defined on $\mathcal{H}_{n}$, and policies  $\widetilde \pi \in \wPiH$ consist of functions $\widetilde{f}_{n}$ defined on $\widetilde{\mathcal{H}}_{n}.$ Therefore there is no simple bijectional correspondence between the two sets. However, the set of histories 

\begin{equation}
\widetilde{\mathcal{H}}^{-}_{n}=\{\widetilde{h}_{n}\in\widetilde{\mathcal{H}}_{n}|\left( \exists k\in\{1,\dots,n-1\} : \d_{k}\neq \d_{k-1}+\C(x_{k},a_{k},x_{k+1})\right) \vee (\d_{0}\neq\mathbf{0})\},\end{equation}
is not accessible in the sense that these histories cannot occur. In a more rigorous manner, we have that $\widetilde{\mathbb{P}}^{\widetilde{\pi}}_{\tilde{x}}(\widetilde{\mathcal{H}}^{-}_{n})=0,$ for all $\widetilde{\pi}\in\wPiH.$
For every policy $\widetilde{\pi}\in \wPiH,$ we may define a new ``reduced'' policy $\widetilde{\pi}^{\red}$ by
\begin{equation}
 \widetilde{f}^{\red}_{n}(\widetilde{h}_{n})=\begin{cases}
 a^{\red}(x_{n}), &  \text{if } \widetilde{h}_{n}\in\widetilde{\mathcal{H}}^{-}_{n},\\
  \widetilde{f}_{n}(\widetilde{h}_{n}), & \text{otherwise},
 \end{cases}
\end{equation}
where $a^{\red}$ can be any arbitrary but fixed point in $\D(x_{n}).$
Then for the set $\wPiH^{\red}=\{\widetilde{\pi}^{\red}\in\wPiH:\widetilde{\pi}\in\wPiH\},$ we can define a bijection to $\PiH.$
To do so, for $\pi = (f_{0},f_{1},\dots)\in\PiCh,$ we define $\widetilde{\pi}^{\red}=( \widetilde{f}^{\red}_{0},\widetilde{f}^{\red}_{1},\dots)\in \wPiH^{\red},$ by

 \begin{equation}\label{polpol}
 \begin{split}
 \widetilde{f}^{\red}_{n}\left(\left(x_0,0\right),a_{0},\dots,\left(x_{n-1},\sum_{i=0}^{n-1}\mathbf{C}(x_{i},a_{i},x_{i+1})\right),a_{n-1},\left(x_{n},\sum_{i=0}^{n}\mathbf{C}(x_{i},a_{i},x_{i+1})\right)\right)=\\
 \begin{cases}
a^{\red}, &  \text{if } \widetilde{h}_{n}\in\widetilde{\mathcal{H}}^{-}_{n}, \\
f_{n}(x_{0},a_{0},\dots,x_{n-1},a_{n-1},x_{n}), & \text{otherwise}.
\end{cases}
\end{split}
 \end{equation} 

It is easy to see that the value function of (\ref{PTotal}) coincides with the value function of (\ref{PCostToGo}) with $\d=\mathbf{0}$, i.e.
$$\mathcal{V}_N(x) =\inf_{\pi\in\PiH}\Iw_{N}(x,\pi)=  \inf_{\widetilde{\pi}\in\wPiH^{\red}} \widetilde{\Iw}_{N}((x,\mathbf{0}),\widetilde{\pi})=  \inf_{\widetilde{\pi}\in\wPiH} \widetilde{\Iw}_{N}((x,\mathbf{0}),\widetilde{\pi})=\widetilde{\mathcal{V}}_{N}((x, \mathbf{0})).$$ 

The next step is to derive a Bellman-style equation for the augmented problem (\ref{PCostToGo}). It can be shown that the minimizer of (\ref{PCostToGo}) is a Markovian policy.

\subsubsection{Bellman operator and first theorem}

First, for a fixed $m\in\mathbb{R},$ we define the set
\begin{align*}
& \Delta := \big\{ v : \X \times \R^\imax \to \R ~\big\vert~ v \hspace{6pt} \text{is lower semi-continuous,} \hspace{6pt} v(x,\cdot)\hspace{6pt} \text{is continuous, }\hspace{6pt} \|v\|_{\infty}<\infty, \hspace{6pt} \inf_{x,d}\{v(x,\d)\} \geq m, \\
&  \text{and for all}\hspace{6pt} x \in \X \hspace{6pt}\text{is component-wise increasing (decreasing) in}\hspace{6pt} \{i<i_{\tau}\}\hspace{6pt}  \text{(in $\{i>i_{\tau}\}$}) \big\}.
\end{align*}

For $v \in \Delta$ and a Markovian decision rule $\widetilde g \in \wPiX$, we define the operators
\begin{equation*}
	\begin{split}
		T_{\g}[v](\widetilde{x})
		&= T_{\g}[v](x,\d)
		= \int v\left(\widetilde{x}'\right)\widetilde{\mathrm{P}}\left(d\widetilde{x}' \mid \widetilde{x},\g(\widetilde{x})\right)
		= \int v\left((x',r')\right)(\mathcal{T}_{(x,\d)})_{\#}\mathrm{P}\left(dx' \mid x,\g(x,\d)\right) \\
		& =\int v\left(x',\C(x,\g(x,\d),x')+\d\right)\mathrm{P}(dx' \mid x,\g(x,\d)),
	\end{split}
\end{equation*}
and
\begin{equation*}
T[v](x,\d)=\inf_{a\in D (x)}\int v(x',\C(x,a,x')+\d)P(dx' \mid x,a),
\end{equation*}
whenever the integrals exist. $T$ is called the \emph{minimal cost operator}. We say that a Markovian decision rule $\widetilde g$ is a \emph{minimizer} of $v$ if $T_{\widetilde g}[v] = T[v]$. In this situation, $\widetilde g(x,\d)$ is a minimizer of
$$\D(x) \ni a \mapsto \int v\left(x',\C(x,a,x')+\d\right)P(dx' \mid x,a)$$
for every $(x,\d) \in \X \times \R^\imax$. We may now state the main result of this section:

\begin{theorem} \label{thm:SumsOfCostThm}
	Let $\wt\Vw_{0}(x,\d) := \Uw (\d)$. Then, the following holds:
	\begin{enumerate}[label=\alph*)]
		\item For any Markovian policy $\widetilde \pi=(\g_0, \g_1, \dots) \in \wPiX$, we have the cost iteration
		\begin{equation*}
			\wt \Iw_{n}((x,\d),\widetilde{\pi}) = T_{\wt g_0}[\dots [T_{\wt g_{n-1}}[\wt\Vw_{0}]]](x,\d)
		\end{equation*}
		for all $n=1,\dots,N.$
		
		\item If an optimal policy exists it is Markovian, i.e.
		$$\inf_{\widetilde\pi \in \wPiH} \Iw_{N}(\widetilde{x},\widetilde\pi) = \inf_{\widetilde\pi \in \wPiX} \Iw_{N}(\widetilde{x},\widetilde\pi).$$
		
		\item The operator $T: \Delta \to \Delta$ is well-defined, and for every $v \in \Delta$, there exists a minimizer of $T[v]$.
		
		\item We get the Bellman-style equation
		$$\wt\Vw_{n}(x,\d)= T[\wt\Vw_{n-1}](x,\d)
		= \inf_{a\in D(x)} \int \wt\Vw_{n-1}(x',\C(x,a,x')+\d)P(dx' \mid x,a)$$
		for all $n=1,\dots,N$.
		
		\item If $\widetilde g^*_{n}$ is a minimizer of $\wt \Vw_{n-1}$ for $n=1,\dots,N$, then the history-dependent policy $\pi^* = (f^*_0,\dots, f^*_{N-1})$, defined by
		$$f_n^*(h_{n}):=
		\begin{dcases}
		\widetilde g^*_{N}(x_0, 0) & \text{if } n=0, \\
		\widetilde g^*_{N-n} \left(x_n,\sum_{k=0}^{n-1} \C(x_k,a_k,x_{k+1})\right) & \text{otherwise},
		\end{dcases}
		$$
		is an optimal policy for problem (\ref{PTotal}).\medskip
	\end{enumerate}
\end{theorem}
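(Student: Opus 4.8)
The plan is to prove the five claims in a dependency order: first the purely probabilistic cost iteration (a), then the analytic core (c), and finally to bootstrap (c) into the dynamic-programming statements (b), (d) and (e) by matching a lower and an upper bound. For part (a) I would induct on $n$ using the Markov property of the augmented chain. The base case $n=1$ is just the definition of $T_{\g_0}$ applied to $\wt\Vw_0=\Uw$, together with $D_1=\C(X_0,A_0,X_1)+\d$. For the step, I condition on the first transition $(\widetilde X_0,A_0,\widetilde X_1)$: under a Markovian policy $A_0=\g_0(x,\d)$ is deterministic, and after one step the process is again an augmented chain started at $(x',\C(x,\g_0(x,\d),x')+\d)$ and driven by the shifted policy $(\g_1,\g_2,\dots)$. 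The tower property rewrites $\wt\Iw_n$ as $\int\wt\Iw_{n-1}(\cdot)\,\mathrm P(dx'\mid x,\g_0(x,\d))$, and the induction hypothesis identifies the integrand as the $(n-1)$-fold composition, so that the outer integral is exactly $T_{\g_0}$ applied to it.

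Part (c) is the crux. I would first show that the integrand $h(x,a,\d,x'):=v(x',\C(x,a,x')+\d)$ is jointly lower semicontinuous and bounded below: the bound $h\ge\Uw(\0)$ follows from $v\ge\Uw$ on $(\R^+)^\imax$ and monotonicity of $\Uw$, while joint lower semicontinuity follows by combining lower semicontinuity of $\C$, the continuity and monotonicity of $v(x',\cdot)$, and lower semicontinuity of $v$ in its first argument, through an $\eps$-monotonicity estimate $v(x_k',\C_k+\d_k)\ge v(x_k',\C+\d-\eps\one)$ taken along a $\liminf$-attaining subsequence, followed by $\eps\downarrow 0$. Since $\mathrm P$ is weakly continuous, the map $(x,\d,a)\mapsto w(x,\d,a):=\int h\,\mathrm P(dx'\mid x,a)$ is then lower semicontinuous by the standard lower-semicontinuity-under-weak-convergence (generalized Fatou) result for lsc, bounded-below integrands. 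Because $D(x)$ is compact and $x\mapsto D(x)$ is upper semicontinuous, the minimization-and-measurable-selection lemma yields that $T[v](x,\d)=\inf_{a\in D(x)}w(x,\d,a)$ is lower semicontinuous and is attained at a measurable minimizer $\g$. The remaining membership conditions are light: monotonicity of $T[v](x,\cdot)$ is inherited from that of $v(x',\cdot)$; continuity in $\d$ follows since $T[v](x,\cdot)$ is simultaneously lsc and, as a pointwise infimum of the continuous maps $\d\mapsto w(x,\d,a)$ (continuity of each by dominated convergence, with the local monotone sandwich $v(x',\C+\d-\delta\one)\le v(x',\C+\d_k)\le v(x',\C+\d+\delta\one)$), upper semicontinuous; and $T[v](x,\d)\ge\Uw(\d)$ follows from $v\ge\Uw$ together with $\C\ge\underline c\,\one>\0$.

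With (c) established, set $J_n:=T^n[\wt\Vw_0]$; since $\wt\Vw_0=\Uw\in\Delta$, each $J_n\in\Delta$ and admits a minimizer $\g^*_n$ of $T[J_{n-1}]$. I would prove $\wt\Vw_N=J_N$ by a two-sided bound. For the lower bound, induction on the horizon shows $\wt\Iw_n((x,\d),\widetilde\pi)\ge J_n(x,\d)$ for every history-dependent $\widetilde\pi$: conditioning on the first step and passing to the conditional (still history-dependent) shifted policy $\widetilde\pi'$, the hypothesis gives $\wt\Iw_{n-1}(\cdot,\widetilde\pi')\ge J_{n-1}$, and bounding $\int J_{n-1}\,\mathrm P(dx'\mid x,a)$ below by its infimum over $a\in D(x)$ gives $J_n$. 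For the upper bound, the Markovian policy that applies $\g^*_{N-n}$ when $N-n$ steps remain achieves $\wt\Iw_N((x,\d),\widetilde\pi^*)=J_N(x,\d)$ by part (a), since each stage operator equals $T$ at a minimizer. Matching the bounds yields $\wt\Vw_N=J_N=T[\wt\Vw_{N-1}]$, which is (d), and shows the infimum over $\wPiH$ is attained within $\wPiX$, which is (b). Finally, (e) follows by transporting this optimal augmented Markovian policy back through the policy bijection of Section~\ref{assasin} and invoking the already-established identity $\Vw_N(x)=\wt\Vw_N(x,\0)$.

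The one genuinely technical point is the preservation of lower semicontinuity in (c). The joint lower semicontinuity of $h$ and, above all, the passage from the pointwise lsc integrand to lsc of $w$ under only weak continuity of $\mathrm P$ — where the integrand \emph{and} the measure both vary with $(x,a)$ — is the step requiring care; I would either cite the corresponding semicontinuity-and-selection lemmas from the MDP literature we follow or reprove the generalized Fatou inequality by truncation from below by bounded continuous functions. Everything else (the Markov-property induction, monotonicity and the bound by $\Uw$, the two-sided value comparison, and the bijection) is routine once the operator is known to stabilize $\Delta$ and to admit minimizers.
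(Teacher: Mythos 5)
Your proposal is correct and its overall architecture matches the paper's: the cost iteration (a) by conditioning on the first transition, the analytic core (c) establishing that $T$ maps $\Delta$ to $\Delta$ with measurable minimizers, the two-sided bound giving the Bellman recursion (d), and the transport back through the policy bijection of Section~\ref{assasin} for (e). The one genuine structural difference is your treatment of (b): the paper simply cites Theorem~2.2.3 of B\"auerle and Rieder for the sufficiency of Markovian policies and then runs the value-comparison argument of (d) with infima taken only over $\wPiX$, whereas you prove the lower bound $\wt\Iw_n((x,\d),\widetilde\pi)\ge T^n[\wt\Vw_0](x,\d)$ directly for \emph{history-dependent} $\widetilde\pi$ by conditioning and passing to the shifted conditional policy, so that (b) drops out as a corollary of matching this against the Markovian policy built from the minimizers. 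Your route is more self-contained and makes the logical dependence ((b) is not an input to (d) but an output) cleaner; the paper's route is shorter at the cost of two external citations. The same trade-off appears in (c): the paper delegates the joint lower semicontinuity and the measurable-selection step to Lemma~\ref{thm:minexists} (itself resting on Hinderer's Lemma~17.11 and B\"auerle--Rieder Proposition~2.4.3), while you sketch reproving the generalized Fatou/selection machinery; your $\eps$-monotonicity sandwich for the joint lower semicontinuity of $(x,a,\d,x')\mapsto v(x',\C(x,a,x')+\d)$ and the infimum-of-continuous-maps argument for upper semicontinuity in $\d$ are exactly the ingredients the paper uses implicitly. Either way the argument closes; just be aware that your dominated-convergence step for continuity in $\d$ needs $v$ integrable (e.g.\ bounded above on the relevant range), which holds for the iterates $\wt\Vw_n$ but is not literally part of the definition of $\Delta$ --- a point the paper is equally casual about.
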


For the proof of Theorem \ref{thm:SumsOfCostThm}, we need the following lemma:\medskip

\begin{lemma} \label{thm:minexists}
	Let $v:\X \times \R^\imax \to \R$ be bounded and lower semi-continuous. Suppose
	\begin{enumerate}
		\item $D(x)$ is compact,
		\item $x \mapsto D(x)$ is upper semi-continuous,
		\item $(x,\d,\g,x') \mapsto v(x', \C(x,\g(x,\d),x')+\d)$ is lower semi-continuous. 
	\end{enumerate} Then, $Tv$ is is lower semi-continuous and there exists a minimizer $\g^*$ such that  $T_{\g^*}v = Tv$.
\end{lemma}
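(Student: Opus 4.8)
The plan is to treat $T[v]$ as the infimum over feasible actions of a single one-step integral functional, and to establish the two assertions — lower semi-continuity of $T[v]$ and existence of a minimizing decision rule — by combining a semicontinuity property of that functional with a measurable-selection argument. Concretely, I would write, for $(x,a)\in\D$ and $\d\in(\R^+)^\imax$,
$$w(x,\d,a):=\int v\big(x',\C(x,a,x')+\d\big)\,\mathrm{P}(dx'\mid x,a),$$
so that $T[v](x,\d)=\inf_{a\in D(x)}w(x,\d,a)$ and $T_{\g}[v](x,\d)=w\big(x,\d,\g(x,\d)\big)$. Since $v$ is bounded and $\C$ is bounded (Assumption~4), $w$ is well defined and finite everywhere. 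The proof then splits into (i) showing that $w$ is lower semi-continuous on its domain, and (ii) deducing from (i), together with compactness and upper semi-continuity of $x\mapsto D(x)$, that the infimum is lower semi-continuous and attained by a measurable selector.

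For (i), fix a convergent sequence $(x_n,\d_n,a_n)\to(x,\d,a)$ with $(x_n,a_n),(x,a)\in\D$. By the standing assumption that $\mathrm{P}$ is weakly continuous, $\mathrm{P}(\cdot\mid x_n,a_n)\to\mathrm{P}(\cdot\mid x,a)$ weakly. Hypothesis~3 (equivalently, lower semi-continuity of $(x,\d,a,x')\mapsto v(x',\C(x,a,x')+\d)$) makes the integrand lower semi-continuous, and it is bounded below since $v$ is bounded. I would then invoke the standard semicontinuity result for integral functionals against weakly converging measures: approximate the bounded-below lower semi-continuous integrand from below by an increasing sequence $(g_k)_k$ of bounded continuous functions converging pointwise up to it; for each fixed $k$ the integral $\int g_k\,d\mathrm{P}(\cdot\mid x_n,a_n)$ converges to $\int g_k\,d\mathrm{P}(\cdot\mid x,a)$ by weak convergence, and a monotone-convergence passage in $k$ yields
$$\liminf_n w(x_n,\d_n,a_n)\ \geq\ w(x,\d,a).$$
This is the lower semi-continuity of $w$, and it is the step I expect to be the main obstacle, since it is where the weak continuity of the kernel, the boundedness, and the joint lower semi-continuity of the integrand must be made to cooperate.

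For (ii), lower semi-continuity of $T[v]$ follows from a routine $\liminf$ argument: given $(x_n,\d_n)\to(x,\d)$, pass to a subsequence realizing $\liminf_n T[v](x_n,\d_n)$ and choose near-optimal actions $a_n\in D(x_n)$ with $w(x_n,\d_n,a_n)\leq T[v](x_n,\d_n)+1/n$; by compactness and upper semi-continuity of $D$, the $(a_n)$ admit an accumulation point $a\in D(x)$, and lower semi-continuity of $w$ gives $\liminf_n T[v](x_n,\d_n)\geq w(x,\d,a)\geq T[v](x,\d)$. For each fixed $(x,\d)$, compactness of $D(x)$ together with lower semi-continuity of $a\mapsto w(x,\d,a)$ guarantees that the infimum is attained, so $T[v]$ is in fact a minimum. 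Finally, the existence of a \emph{measurable} minimizer $\g^*$ with $T_{\g^*}[v]=T[v]$ follows from a measurable selection theorem for lower semi-continuous cost functions over compact-valued, upper semi-continuous feasibility correspondences (e.g.\ the Kuratowski--Ryll-Nardzewski selection theorem, or the version tailored to Markov decision models in Bertsekas--Shreve / Sch\"al). The selection part is standard once (i) is in hand; the genuine work is the integral-functional semicontinuity above.
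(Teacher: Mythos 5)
Your proposal is correct and follows essentially the same route as the paper, which simply delegates your step (i) to Lemma 17.11 of \citet{hinderer1970} (lower semi-continuity of $(x,\d,\g)\mapsto T_{\g}[v](x,\d)$ under weak continuity of $\mathrm{P}$ and joint lower semi-continuity of the integrand) and your step (ii) to Proposition 2.4.3 of \citet{bauerle2011} with bounding function $b\equiv 1$ (attainment, lower semi-continuity of the infimum, and a measurable minimizing selector). You have merely unpacked those two citations into a self-contained argument.
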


\begin{proof}
	By Lemma 17.11 in \citet{hinderer1970}, $(x,\d,\g) \mapsto T_{\g}v(x,\d)$ is lower semi-continuous. The claim then follows from a similar argument to Proposition 2.4.3 in \citet{bauerle2011}.  \qed
\end{proof}

\begin{proof}[Proof of Theorem \ref{thm:SumsOfCostThm}]
	The proof is similar to Theorem 2.3.4 and Theorem 2.3.8 in \citet{bauerle2011} with a different state space, see also \citet{bauerle2014}.
	%\begin{enumerate}[wide, labelwidth=!, labelindent=0pt, label=\textit{{ad }\alph*)}]
	\begin{enumerate}[label=\textit{{ad }\alph*)}]
		\item
		An easy calculation shows that
		\begin{align*}
		\wt \Iw_1((x,\d),\widetilde{\pi})
		&= \mathbb{E}^{\widetilde{\pi}}_x \Big[
		\Uw \left(
		\C(X_0, A_0, X_1) + \d \right)
		\Big] \\
		&= \int \Uw \left(
		\C(x, \wt g _0(x,\d), x') + \d
		\right) P(dx' \mid x, \wt g_1(x,\d)) = T_{\wt g_1} [\wt \Vw_0](x,\d).
		\end{align*}
		Now, let $\wt \pi^+ = (\wt g_2, \dots)$. For $n = 2,\dots,N$, we get
		\begin{align*}
		&\wt \Iw_n((x,\d),\widetilde{\pi})
		= \mathbb E^{\wt \pi}_x \left[
		\Uw \left(
		\sum_{k=0}^{n-1} \C(X_k, A_k, X_{k+1}) + \d
		\right)
		\right] \\
		&= \int \E^{\wt\pi^+}_{x'} \left[
		\Uw \left(
		\sum_{k=0}^{n-2} \C(X_k, A_k, X_{k+1}) + \d + \C(x, \wt g_{1}(x, \d), x')
		\right)
		\right] P(dx' \mid x,\wt g_0(x,\d)) \\
		&= \int \wt \Iw((x', \d), \wt \pi^+) P(dx' \mid x, \wt g_1(x,\d)) = T_{\wt g_1}[\wt \Iw_{n-1}(\cdot, \wt \pi^+)](\wt x)
		= T_{\g_{1}}[\dots [T_{\g_{n-1}}[\wt\Vw_0]]](x,\d).
		\end{align*}
		The claim follows then by induction.
		
		\item This follows from Theorem 2.2.3 in \citet{bauerle2011}.
		
		\item
		Note that every $v \in \Delta$ is bounded from below by $m$.	By our assumptions, we get that $(x,\d,\g,x') \mapsto v(x', \C(x,\g(x,\d),x')+\d)$ is lower semi-continuous, and bounded from below, i.e. we are in the setting of Lemma \ref{thm:minexists}. Thus, $T[v]$ is lower semi-continuous and there exists a minimizer $\g^*$ such that  $T_{\g^*}[v] = T[v]$.
		
		For fixed $x \in \X$, and $a \in \D(x)$, the map $\d \mapsto \int v(x', \C(x,a,x')+\d) P(dx' \mid x,a)$ has the same monotonicities with respect to $r_{i}'s$ as $v$ and it is continuous for every $a \in D(x)$. The continuity can be proven with the dominated convergence theorem since $\|v\|_{\infty}<\infty$. Therefore, the infimum of these maps over all $a \in D(x)$ is  upper semi-continuous in $\d$. With this, we have shown that $Tv(x,\cdot)$ is upper and lower semi-continuous, and therefore continuous, and respects the same monotonicities as $v$ for all $x \in \X$. Because $v(x,\cdot) \geq m$, we have $T[v](x,\cdot) \geq m$. The boundness assumption follows from the definition of $T$ and the corresponding property of $v$. We have then shown that $T: \Delta \to \Delta$ is well-defined.
		
		\item
		Let $\g^*_{n}$ be a minimizer of $\Vw_{n-1}$ for $n=1,\dots,N$ and denote by $\pi^* = (\g^*_1,\dots,\g^*_{N})$ the associated policy.
		
		For $n = 1$, we get that
		\begin{equation*}
		\begin{split}
		\wt \Vw_1(x,\d)
		= \inf_{\widetilde{\pi}\in  \wPiX}\E_x^{\wt\pi} \Big[
		\Uw \left(
		\C(X_0, A_0, X_1) + \d
		\right)
		\Big] {=} \inf_{a \in D(x)} \int \Uw(\C(x,a,x') + \d) P(dx' \mid x,a) = T[\wt\Vw_0](x,\d),
		\end{split}
		\end{equation*}
		and obviously, $\wt \Vw_1(x,\d) = \wt \Iw_1((x,\d),\widetilde{\pi}^*)$.
		Now, assume that $\wt \Iw_n((x,\d),\widetilde{\pi}^*) = \wt \Vw_n(x,\d)$ for a fixed $n \in \{1,\dots,N\}$. Then,
		\begin{align*}
		\wt \Iw_{n+1}((x,\d),\wt \pi^*)
		&= T_{\wt g^*_1}[\wt \Iw_n(\cdot,(\wt \pi^*)^+)](x,\d) & \text{using (a),} \\
		&= T_{\wt g^*_1}[\wt \Vw_n](x,\d) & \text{by the induction hypothesis,} \\
		&= T[\wt \Vw_n](x,\d) & \text{by definition of $\g^*_1$.}
		\end{align*}
		By taking the infimum, we get
		\begin{equation}\label{eqn:inf1}
		\inf_{\wt \pi \in \wPiX} \wt \Iw_{n+1}((x,\d),\wt\pi) = \wt \Vw_{n+1}(x,\d) \leq \wt \Iw_{n+1}((x,\d),\wt \pi^*) = T[\wt \Vw_{n}](x,\d).
		\end{equation}
		On the other hand, with an arbitrary policy $\widetilde \pi = (\wt g_1,\dots,\wt g_{N})$,
		\begin{align*}
		\wt \Iw_{n+1}((x,\d),\wt\pi)
		&=T_{\wt g_1}[\wt \Iw_n(\cdot,\wt \pi^+)](x,\d) & \text{using (a),} \\
		&\geq T_{\wt g_1}[\wt \Vw_n](x,\d) & \text{by the monotonicity of $T$,} \\
		&\geq T [\wt \Vw_n](x,\d) & \text{by taking the infimum.}
		\end{align*}
		By taking the infimum, we get
		\begin{equation} \label{eqn:inf2}
		\inf_{\widetilde{\pi}\in \wPiX} \wt \Iw_{n+1}((x,\d),\widetilde{\pi}) = \wt \Vw_{n+1}(x,\d) \geq T[\wt \Vw_n](x,\d).
		\end{equation}
		From (\ref{eqn:inf1}) and (\ref{eqn:inf2}), it follows by induction that
		$$
		\wt \Vw_{n}(x,\d)
		= T[\wt \Vw_{n-1}](x,\d)
		= \wt \Iw_n((x,\d),\wt \pi^*)
		$$
		for all $n = 1,\dots,N$.		
		
		\item 
		Consider the Markovian policy $\widetilde{\pi}^* = (\widetilde g^*_1,\dots,\widetilde g^*_{N})$ as defined in \textit{(d)}. We have just shown that $\wt \Vw_N(x,\d) = \wt \Iw_N((x,\d),\widetilde\pi^*)$, i.e. $\widetilde\pi^*$ is a minimizer of (\ref{PCostToGo}), and therefore an optimal policy for the $N$-step MDP with states in $X \times \R^\imax$. The claim follows by \eqref{polpol} in Section \ref{assasin} where the policy bijection is explored. \qed
	\end{enumerate}
\end{proof}

\subsection{Discounted finite horizon problems}

We now consider finite horizon problems with a discount vector $\boldsymbol{\beta} \in (0,1)^{\imax}$, and prove the corresponding analogon to Theorem \ref{thm:SumsOfCostThm}. The techniques used are similar to those from \citet{bauerle2014}, where they are applied to \textit{univariate} utility functions. Similar to the previous setting, we define the \emph{total cost} $\Iw_N(x,\pi)$, given an initial state $x \in \mathcal{X},$ and a history dependent policy $\pi \in \PiCh$, by

\begin{equation}\label{dot}\Iw_{N}(x,\pi):=\E_{x}^{\pi}\left[\Uw\left(\sum_{n=0}^{N-1} \boldsymbol{\beta}^{n}\cdot \C(X_{n},A_{n},X_{n+1})\right)\right],\end{equation}

and the corresponding value function, by
\begin{equation}\tag{$P$}
\Vw_{N}(x) := \inf_{\pi\in\PiH}\Iw_{N}(x,\pi).
\end{equation}
We remark that the dot product appearing in  \eqref{dot} is a componentwise product, i.e. $$\mathbf{a}\cdot\mathbf{b} =(a_{1}b_{1},\dots,a_{n}b_{n})\hspace{32pt}\text{for } \mathbf{a}=(a_{1},\dots,a_{n}), \mathbf{b}=(b_{1},\dots,b_{n}).$$

\subsubsection{Augmented problem}
Again, we consider an augmented state space $\X \times \R^\imax \times (0,1)^{i_{\max}}$, where the new components keep track of the decreasing discount factor. Policy augmentation is done similar to the previous section. The new transition kernel $\widetilde P$ of the augmented problem is given by
\begin{equation}
\widetilde{\mathrm{P}}(\cdot|\widetilde{x};a)
=\widetilde{\mathrm{P}}(\cdot|(x,\d,\z);a)
=(\mathcal{T}_{(x,\d,\z)})_{\#}\mathrm{P}(\cdot|x,a),
\end{equation}
where 
\begin{equation}
\mathcal{T}_{(x,\d,\z)}(x')=(x',\z\cdot \mathbf{C}(x,a,x')+\d, \z\cdot \bb).
\end{equation}
On the augmented state space, given an initial state $x \in \X$, initial cost $\d \in \R^\imax$, initial discount rates $\z \in (0,1)^{i_{max}}$, and a policy $\widetilde\pi \in \wPiH$ the total cost $\wt \Iw_n((x,\d, z), \widetilde\pi)$ for $n=1,...,N$ is given by
\begin{equation}
	\wt\Iw_{n}(\wt x, \wt \pi) = \widetilde{\Iw}_{n}((x,\d,z),\widetilde{\pi}):=
	\widetilde{\E}_{\wt x}^{\widetilde{\pi}} \left[
	\Uw \left(
	\z\cdot \sum_{k=0}^{n-1} \bb^{k} \cdot\C(X_{k},A_{k},X_{k+1})+\d
	\right)
	\right].
\end{equation}
The corresponding value function is
\begin{equation}\tag{$\widetilde P$}
	\widetilde{\mathcal{V}}_{n}(\widetilde{x}) := \inf_{\widetilde{\pi}\in\wPiH} \widetilde{\Iw}_{n}(\widetilde{x},\widetilde{\pi}).
\end{equation}
\subsubsection{Bellman operator and second theorem}

Let
\begin{align*}
	\Delta := \big\{ v : \X \times \R^\imax &\times (0,1)^\imax  \to \R ~\big\vert~
	 \text{$v$ is lower semi-continuous,} \\
	& \text{$v(x,\cdot,\cdot)$ is continuous, and componentwise increasing for all $x \in \X$}, \\
	& \text{$v(x,\d, \z) \geq \Uw(\d)$ for all $(x,\d, \z) \in \X \times \R^\imax \times (0,1)$} \big\}.
\end{align*}

For $v \in \Delta$ and a Markovian decision rule $\widetilde g$, we define the operators
\begin{equation*}
	\begin{split}
		T_{\g}[v](\wt x)
		&= T_{\g}[v](x, \d, \z)
		:= \int v\left(\widetilde{x}'\right)\widetilde{\mathrm{P}}\left(d\widetilde{x}' \mid \widetilde{x},\g(\widetilde{x})\right)
		= \int v\left((x',\d',\z')\right)(\mathcal{T}_{(x,\d,\z)})_{\#}\mathrm{P}\left(dx' \mid x,\g(x,\d,\z)\right) \\
		& =\int v\left(x',\z\cdot\C(x,\g(x,\d,\z),x')+\d, \z\cdot \bb \right)\mathrm{P}(dx' \mid x,\g(x,\d,z)),
		\end{split}
\end{equation*}
and
\begin{equation*}
T[v](x,\d,\z)=\inf_{a\in D (x)}\int v(x', \z\cdot\C(x,a,x')+\d, \z\cdot\bb )\mathrm{P}(dx' \mid x,a),
\end{equation*}
whenever the integrals exist. $T$ is again called the \textit{minimal cost operator}. We may now state the main theorem of this section.

\begin{theorem}
	Let $\wt\Vw_0(x, \d, \z) := \Uw (\d)$. The following holds:
	\begin{enumerate}[label=\alph*)]
		\item For any Markovian policy $\widetilde \pi=(\g_1, \dots) \in \wPiX$, we have the cost iteration
		\begin{equation*}
		\wt \Iw_n((x,\d,\z),\wt \pi) = T_{\g_{1}}[\dots [T_{\g_{n-1}}[\wt\Vw_0]]](x,\d,\z)
		\end{equation*}
		for all $n=1,\dots,N.$
		
		\item The optimal policy is Markovian, i.e.
		$$\inf_{\widetilde\pi \in \wPiH} \Iw_{N}(\widetilde{x},\widetilde\pi) = \inf_{\widetilde\pi \in \wPiX} \Iw_{N}(\widetilde{x},\widetilde\pi).$$
		
		\item The operator $T: \Delta \to \Delta$ is well-defined, and for every $v \in \Delta$, there exists a minimizer of $T[v]$.
		
		\item We get the Bellman-style equation
		$$\wt\Vw_{n}(x,\d,\z)= T[\wt\Vw_{n-1}](x,\d,\z)
		= \inf_{a\in D(x)} \int \wt\Vw_{n-1}(x',\z\cdot \C(x,a,x')+\d, \z\cdot\bb)P(dx' \mid x,a)$$
		for all $n=1,\dots,N$.
		
		\item If $\widetilde g^*_{n}$ is a minimizer of $\wt\Vw_{n-1}$ for $n=1,\dots,N$, then $\widetilde \pi^* = (\widetilde g^*_1,\dots,\widetilde g^*_{N})$ is an optimal policy for ($\widetilde P$). In this situation, the history-dependent policy $\pi^* = (f^*_0,\dots, f^*_{N-1})$, defined by
		$$f_n^*(h_{n}):=
		\begin{dcases}
		\widetilde g^*_N(x_0, 0, 1) & \text{if } n=0, \\
		\widetilde g^*_{N-n}\left(x_n,\sum_{k=0}^{n-1} \bb^k \cdot\C(x_k,a_k,x_{k+1}), \bb^n \right) & \text{otherwise},
		\end{dcases}
		$$
		is an optimal policy for problem (P).
	\end{enumerate}
\end{theorem}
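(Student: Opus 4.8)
The plan is to follow the proof of Theorem~\ref{thm:SumsOfCostThm} almost verbatim, the only genuinely new ingredient being the bookkeeping of the discount slot $\z$. For part (a) I would argue by induction on $n$. The base case $n=1$ is the direct computation
\begin{equation*}
\wt\Iw_1((x,\d,\z),\wt\pi)
= \int \Uw\!\left(\z\cdot\C(x,\g_1(x,\d,\z),x')+\d\right) \mathrm{P}(dx' \mid x,\g_1(x,\d,\z))
= T_{\g_1}[\wt\Vw_0](x,\d,\z).
\end{equation*}
The inductive step rests on the one substantive observation of the whole proof, the \emph{discount-shift identity}: after pulling the $k=0$ summand into the cost argument, the tail $\z\cdot\sum_{k=1}^{n-1}\bb^{k}\cdot\C(X_k,A_k,X_{k+1})$ equals $(\z\cdot\bb)\cdot\sum_{j=0}^{n-2}\bb^{j}\cdot\C(X_{j+1},A_{j+1},X_{j+2})$. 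By the Markov property the shifted process started from $X_1=x'$ carries exactly the law appearing in $\wt\Iw_{n-1}$ evaluated at $\mathcal{T}_{(x,\d,\z)}(x')=(x',\z\cdot\C(x,a,x')+\d,\z\cdot\bb)$, whence $\wt\Iw_n((x,\d,\z),\wt\pi)=T_{\g_1}[\wt\Iw_{n-1}(\cdot,\wt\pi^{+})](x,\d,\z)$, and the claim follows from the induction hypothesis.

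Part (b) is unchanged: by the terminal-cost form of the total cost the augmented process is a standard MDP, so an optimal Markovian policy exists by Theorem~2.2.3 in \citet{bauerle2011}. For part (c) I would re-read Lemma~\ref{thm:minexists} with the three-component state $(x,\d,\z)$; its hypotheses are unaltered except that the third now asks for lower semi-continuity of $(x,\d,\z,\g,x')\mapsto v(x',\z\cdot\C(x,\g(x,\d,\z),x')+\d,\z\cdot\bb)$, which holds since $v$ and $\C$ are lower semi-continuous and $\z\mapsto\z\cdot\C+\d$, $\z\mapsto\z\cdot\bb$ are continuous. This yields lower semi-continuity of $Tv$ and a minimizer. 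To see $Tv\in\Delta$, I would note that for fixed $x,a$ the integrand is continuous and componentwise increasing in $(\d,\z)$ --- increasing in $\z$ precisely because $\C\geq\underline c>0$ and $\bb>0$ force both $\z\cdot\C+\d$ and $\z\cdot\bb$ to grow with $\z$ --- so the infimum over $a\in D(x)$ is increasing and upper semi-continuous, hence (with lower semi-continuity) continuous; and $\z\cdot\C\geq\0$ together with monotonicity of $\Uw$ gives $Tv(x,\d,\z)\geq\Uw(\d)$.

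Parts (d) and (e) are then identical to Theorem~\ref{thm:SumsOfCostThm}: the two-sided induction combining the cost iteration of (a) with the monotonicity of $T$ from (c) gives $\wt\Vw_n=T[\wt\Vw_{n-1}]=\wt\Iw_n(\cdot,\wt\pi^{*})$ for the policy $\wt\pi^{*}$ built from minimizers, and the optimal history-dependent policy for $(P)$ is recovered through the policy bijection of Section~\ref{assasin}, the only change being that the discount slot now transports $\bb^{n}$ rather than a constant.

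The step I expect to be the main obstacle is the discount-shift identity in (a): one must get the factor $\bb$ to commute out of the tail sum so that the residual discounted cost aligns with the updated discount $\z\cdot\bb$ stored by $\mathcal{T}_{(x,\d,\z)}$. Once this re-indexing is in place, the remainder is the same semi-continuity and monotonicity bookkeeping as in the undiscounted theorem, with the extra care that monotonicity now has to be checked jointly in $(\d,\z)$ and relies on the strict positivity of the running costs.
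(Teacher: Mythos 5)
Your proposal follows the paper's own route essentially verbatim: the paper likewise proves only part (a) in detail, via exactly the re-indexing you call the discount-shift identity (pulling the $k=0$ term into the cost slot and factoring $\bb$ out of the tail so that it lands in the updated discount coordinate $\z\cdot\bb$), and defers (b)--(e) to the arguments of Theorem \ref{thm:SumsOfCostThm}. Your added remarks on part (c) --- that joint monotonicity in $(\d,\z)$ uses $\C\geq\underline c>0$ and $\bb>0$ --- are correct and in fact supply a detail the paper leaves implicit.
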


\begin{proof}
	 The proof is similar to the derivation of Theorem \ref{thm:SumsOfCostThm}. We will only prove \textit{(a)} by induction. To that end, note that
	\begin{align*}
		\wt \Iw_1((x,\d,\z), \wt \pi)
		&= \wt \E^{\wt \pi}_{x} \bigg[ \Uw(\z\cdot \C(X_0, A_0, X_1) + \d) \bigg] \\
		&= \int \Uw(\z\cdot \C(x, \wt g_0(x,\d,\z), x') + \d) P(dx' \mid x, \wt g_0(x,\d,\z)) \\
		&= T_{\wt g_1}[\wt\Vw_0](x,\d,\z).
	\end{align*}
	Let $\wt \pi^+ = (\wt g_2, \wt g_3, \dots)$. For $n = 2,\dots,N$, we get
	\begin{align*}
		&\wt \Iw_n((x,\d,\z),\widetilde{\pi})
		= \mathbb E^{\wt\pi}_{x} \left[
		\Uw \left(\z \sum_{k=0}^{n-1} \beta^k \C(X_k, A_k, X_{k+1}) + \d
		\right)
		\right] \\
		&= \int \E^{\wt\pi^+}_{x'} \left[
		\Uw \left(\z\cdot \sum_{k=0}^{n-2} \bb^{k+1}\cdot \C(X_k, A_k, X_{k+1}) + \z\cdot \C(x, \wt g_1(x,\d,\z), x') + \d
		\right)
		\right] P(dx' \mid x, \wt g_1(x,\d,\z)) \\
		&= \int \wt \Iw_{n-1}((x',\z\cdot \C(x, \wt g_1(x,\d,\z)+\d,\bb\cdot\z),\wt\pi^+) P(dx' \mid x, \wt g_1(x,\d,\z)) \\
		&= T_{\wt g_1}[\wt \Iw_{n-1}(\cdot, \wt \pi^+)](\wt x) = T_{\g_{1}}[T_{\g_{2}}[\dots [T_{\g_{n-1}}[\wt\Vw_0]]]](x,\d,\z).
	\end{align*}
	The claim follows then inductively. \qed
\end{proof}

\subsection{Infinite horizon problems}

In this section, we study the infinite horizon problem with \textit{discount factor} $\bb \in (0,1)^{\imax}$. For a vector $\boldsymbol{a}\in\mathbb{R}^{\imax},$ we will denote with $\underline{a}=\min\{a_{1},\dots,a_{\imax}\}, \overline{a}=\max\{a_{1},\dots,a_{\imax}\}.$ The notion of $T$ and $\Delta$ from the previous section is unchanged. The total cost in this situation reads as
$$\Iw_\infty(x, \pi) := \E_x^\pi \left[ \Uw \left( \sum_{n=0}^\infty \bb^n\cdot \C(X_n, A_n, X_{n+1}) \right) \right],$$
and $\wt\Vw_\infty$ is defined accordingly. We shall use the following definition:

\begin{definition}
	For a continuous function $\Fw:\mathbb{R}^{d}\rightarrow\mathbb{R},$ we define the modulus of continuity $\omega_{\Fw}(\delta,R)$ on the ball $B(0,R),$ by \begin{equation}
		\omega_{\Fw}(\delta,R)=\sup\left\{|\Fw(x)-\Fw(y)| \ \Big|\ x,y\in B(0,R), \|x-y\|_{2}<\delta\right\} 
	\end{equation}
\end{definition}

Note that the modulus of continuity is increasing in both variables, and it holds
$\lim_{\delta\rightarrow 0}\omega_{\Fw}(\delta,R)\rightarrow 0.$ 

\begin{theorem}
	Let $\underline b(\d, \z) := \Uw\left(\z\cdot \frac{\underline c}{\mathbf{1}-\bb} + \d\right)$ and $\bar b(\d, \z) := \Uw\left(\z\cdot \frac{\bar c}{\mathbf{1}-\bb} + \d\right),$ where $\frac{1}{\boldsymbol{a}}=\left(\frac{1}{a_{1}},\dots,\frac{1}{a_{\imax}}\right)$. 	
	
	\begin{enumerate}[label=\alph*)]
		\item Let $K$ be a compact subset of $\R^\imax$. Then, $T^n [\underline b] \nearrow \wt\Vw_\infty$, $T^n[ \Uw] \nearrow \wt\Vw_\infty$, and $T^n[ \bar b] \searrow \wt\Vw_\infty$ as $n \to \infty$ uniformly on $\X\times K\times (0,1)^{\imax}$.
		
		\item $\wt\Vw_\infty$ is the unique solution of
		$$\left\{
		\begin{array}{l}
			v = T[v], \\
			v \in \Delta, \\
			\underline b(\cdot, \cdot) \leq v(x,\cdot,\cdot) \leq \bar b(\cdot, \cdot) \text{ for all } x \in \X.
		\end{array}
		\right.$$
		
		\item There exists a decision rule $g^*$ that minimizes $\wt\Vw_\infty$.
		
		\item The history-dependent policy $f^* = (f_0^*, f_1^*, \dots)$ given by
		$$f_n^*(h_n) = g^*\left( x_n, \sum_{k=0}^{n-1} \bb^k c(x_k, a_k,x_{k+1}), \bb^n \right)$$
		is an optimal policy for $\Vw_\infty$.
	\end{enumerate}

\end{theorem}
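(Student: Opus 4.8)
The plan is to run monotone value iteration and squeeze the three sequences together using the modulus of continuity of $\Uw$. First I would note that $T$ is monotone on $\Delta$ (if $v \le w$ pointwise then $T[v] \le T[w]$, directly from the definition of the integral operator), and that $\underline b$ and $\bar b$ are respectively a sub- and a supersolution. The latter rests on the componentwise geometric identity $(\z\cdot\bb)\cdot\frac{\bar c}{\mathbf{1}-\bb} + \z\cdot\bar c = \z\cdot\frac{\bar c}{\mathbf{1}-\bb}$: plugging $(\d',\z') = (\z\cdot\C+\d,\ \z\cdot\bb)$ into $\bar b$, using $\C \le \bar c$ and monotonicity of $\Uw$, gives $\bar b(\z\cdot\C+\d,\z\cdot\bb) \le \bar b(\d,\z)$, hence $T[\bar b] \le \bar b$; symmetrically $T[\underline b] \ge \underline b$. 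Since also $\Uw \le \underline b \le \bar b$ and, because the costs are strictly positive, $T[\Uw] = \wt\Vw_1 \ge \wt\Vw_0 = \Uw$, monotonicity of $T$ shows that $T^n[\underline b]$ and $T^n[\Uw]$ are nondecreasing and $T^n[\bar b]$ is nonincreasing, all trapped between $\underline b$ and $\bar b$. Thus each of the three sequences converges pointwise.

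Next I would unfold $T^n$ along the augmented dynamics. Writing $D_n = \d + \z\cdot\sum_{k=0}^{n-1}\bb^k\cdot\C(X_k,A_k,X_{k+1})$ and observing that the discount state after $n$ steps is $\z\cdot\bb^n$, the cost iteration and optimality of Markovian policies give $T^n[v](x,\d,\z) = \inf_{\widetilde\pi\in\wPiX}\widetilde\E^{\widetilde\pi}_{\widetilde x}[v(X_n, D_n, \z\cdot\bb^n)]$. Substituting $v \in \{\underline b, \Uw, \bar b\}$ and using the tail identity $\bb^n\cdot\frac{c}{\mathbf{1}-\bb} = \sum_{k=n}^\infty \bb^k\cdot c$ for $c\in\{\underline c,\bar c\}$ identifies the three integrands as $\Uw$ evaluated at $\d + \z\cdot\sum_{k=0}^\infty\bb^k\cdot\C$ with the tail running costs replaced by $\underline c$, by $0$, and by $\bar c$ respectively. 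As $\underline c \le \C \le \bar c$ and $\Uw$ is increasing, this immediately yields the sandwich $T^n[\underline b] \le \wt\Vw_\infty \le T^n[\bar b]$ and $T^n[\Uw] \le \wt\Vw_\infty$, where $\wt\Vw_\infty$ is well defined because the discounted sum is bounded.

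The crucial step is that this gap closes uniformly, and I expect it to be the main obstacle. On $\X \times K \times (0,1)^\imax$ with $K \subset (\R^+)^\imax$ compact, every argument appearing above lies in a fixed ball $B(0,R)$ with $R$ determined by $K$ and $\frac{\bar c}{\mathbf{1}-\bb}$, while the two extreme integrands differ only by the vanishing tail $\z\cdot\bb^n\cdot\frac{\bar c-\underline c}{\mathbf{1}-\bb}$, of norm at most $\delta_n := \|\bb^n\cdot\frac{\bar c-\underline c}{\mathbf{1}-\bb}\| \to 0$ uniformly in $\z \le \mathbf{1}$. Since both the infimum over policies and the expectation preserve a uniform pointwise bound on the integrands, I obtain $0 \le T^n[\bar b] - T^n[\underline b] \le \omega_{\Uw}(\delta_n, R) \to 0$. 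This squeezes all three sequences to the common limit $\wt\Vw_\infty$ uniformly on $\X \times K \times (0,1)^\imax$, proving (a); it also shows $\wt\Vw_\infty \in \Delta$, with lower semicontinuity coming from the increasing limit $T^n[\Uw] = \wt\Vw_n$ of lower semicontinuous functions and continuity and monotonicity in $(\d,\z)$ from the uniform limit. The delicate point is keeping all arguments in one common ball so that a single modulus $\omega_{\Uw}(\cdot,R)$ controls the difference uniformly in $\z$.

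For (b), passing to the limit in $T[T^n[\bar b]] = T^{n+1}[\bar b]$ (justified by monotone/dominated convergence) gives $T[\wt\Vw_\infty] = \wt\Vw_\infty$; and if $v \in \Delta$ is any fixed point with $\underline b \le v \le \bar b$, then $T^n[\underline b] \le v = T^n[v] \le T^n[\bar b]$ forces $v = \wt\Vw_\infty$, yielding uniqueness. For (c), since $\wt\Vw_\infty \in \Delta$, Lemma \ref{thm:minexists} (invoked exactly as in part (c) of Theorem \ref{thm:SumsOfCostThm}) furnishes a minimizer $g^*$ of $T[\wt\Vw_\infty] = \wt\Vw_\infty$. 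Finally, for (d) I would verify the stationary policy: from $T_{g^*}[\wt\Vw_\infty] = \wt\Vw_\infty$ we get $T_{g^*}^n[\underline b] \le \wt\Vw_\infty \le T_{g^*}^n[\bar b]$, and since the tail $\z\cdot\bb^n \to 0$ makes both bounds tend to $\wt\Iw_\infty(\widetilde x, (g^*))$ by dominated convergence, the squeeze gives $\wt\Iw_\infty(\widetilde x, (g^*)) = \wt\Vw_\infty(\widetilde x)$. Transporting this optimal stationary policy on the augmented space back through the policy bijection of Section \ref{assasin} then produces the claimed history-dependent optimal policy $f^*$ for $\Vw_\infty$.
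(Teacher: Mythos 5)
Your proposal is correct and follows essentially the same route as the paper: monotone value iteration started from the sub/supersolutions $\underline b$, $\Uw$, $\bar b$, with the gap between the iterates controlled by the modulus of continuity $\omega_{\Uw}$ applied to the geometrically vanishing discounted tail, uniformly on $\X\times K\times(0,1)^{\imax}$, followed by the fixed-point/uniqueness squeeze, the minimizer lemma for (c), and the stationary-policy squeeze for (d). The only (harmless) deviations are organizational — you obtain $T[\wt\Vw_\infty]=\wt\Vw_\infty$ by passing to the limit in $T^{n+1}[\bar b]=T[T^n[\bar b]]$ rather than via the paper's auxiliary estimate $\varepsilon_n'$ on $T[\wt\Vw_n+\varepsilon_n]$, and you squeeze against $\underline b$ where the paper squeezes against $\Uw$.
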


\begin{proof}
\begin{enumerate}[label=\textit{{ad }\alph*)}]
\item For $n \in \N$ and $(x,\d,\z) \in \wt \X$, it holds
    \begin{equation}\label{modcont}
    \begin{split}
    &\Uw\left(\z\cdot \sum_{k=0}^\infty \bb^k\cdot \C(x_n, a_n, x_{n+1})  + \d\right)-\Uw\left(\z\cdot \sum_{k=0}^n \bb^k\cdot \C(x_n, a_n, x_{n+1})  + \d\right)\\&\leq \omega_\Uw\left(\left\|\z\cdot \bb^n \sum_{k=n}^\infty \bb^{k-n} \cdot\C(x_k, a_k, x_{k+1})\right\|_{2},\left\|\z\cdot \sum_{k=0}^\infty \bb^k\cdot \C(x_n, a_n, x_{n+1})  + \d\right\|_{2}\right)\\&\leq
    \omega_\Uw\left(\imax\overline{z} \overline{\beta}^n \frac{\bar c}{1-\overline{\beta}},\imax\overline{z}\frac{\overline{c}}{1-\overline{\beta}}  + \overline{r}\right)
    \end{split}.
    \end{equation}
Now, we get    
    \begin{equation*}
	    \begin{aligned}
	        \wt\Vw_n(x,\d,\z)
	        &\leq \wt\Iw_{n,\wt\pi}(x,\d,\z)  \leq \wt\Iw_{\infty,\wt\pi}(x,\d,\z) = \E_x^{\wt\pi} \left[ \Uw\left(\z\cdot \sum_{k=0}^\infty \bb^k\cdot \C(X_n, A_n, X_{n+1})  + \d\right) \right] \\
	        &\leq \E_x^{\wt\pi} \left[ \Uw\left(\z\cdot \sum_{k=0}^n \bb^k\cdot \C(X_n, A_n, X_{n+1}) + \d\right) \right]  +  \omega_\Uw\left(\imax\overline{z} \overline{\beta}^n \frac{\bar c}{1-\overline{\beta}},\imax\overline{z}\frac{\overline{c}}{1-\overline{\beta}}  + \overline{r}\right)\\
	        &\leq \wt\Iw_{n,\wt\pi}(x,\d,\z) + \underbrace{\omega_\Uw\left(\imax\overline{z} \overline{\beta}^n \frac{\bar c}{1-\overline{\beta}},\imax\left(\overline{z}\frac{\overline{c}}{1-\overline{\beta}}  + \overline{r}\right)\right)}_{=: \varepsilon_n(x,\d,\z)}.
	    \end{aligned}
	\end{equation*}
Since $K$ is compact, there exists $R > 0$ such that $\bar d < R$. Then, we have 
$$\varepsilon_n(x,\d,\z)\leq \omega_\Uw\left(\imax\overline{\beta}^n \frac{\bar c}{1-\overline{\beta}},\imax\frac{\overline{c}}{1-\overline{\beta}}  + R\right),$$
and since $\overline{\beta} \in (0,1)$, we have $\varepsilon_n \searrow 0$ uniformly. 
% on sets of the form $X\times K\times (0,1)^{\imax},$ where $K$ is a compact subset of $(\mathbb{R}^{+})^{\imax}.$
Because $\wt\pi$ was arbitrary, the previous inequality also holds for the infimum, i.e.
\begin{equation} \label{eqn:withinfimum}
	\wt\Vw_n(x,\d,\z) \leq \wt\Vw_\infty(x,\d,\z) \leq \wt\Vw_n(x,\d,\z) + \varepsilon_n(x,\d,\z),
\end{equation}
    and therefore $\wt\Vw_n \nearrow \wt\Vw_\infty$.

    Recall that $\C$ is componentwise bounded by $\underline c, \bar c > 0$, and therefore, independent of the process $(X_n), (A_n)$, the infinite time cost $\sum_{n=0}^\infty \bb^n\cdot \C(X_n, A_n, X_{n+1})$ is componentwise  bounded by $\frac{\underline c}{\mathbf{1}-\bb}, \frac{\bar c}{\mathbf{1}-\bb}$. We have therefore $\underline b \leq \wt\Vw_\infty \leq \bar b$. Since $T$ is increasing, we have with the previous result $\wt\Vw_{n+1} = T[\wt\Vw_n] \leq T[\wt\Vw_\infty]$, i.e. $\wt\Vw_\infty \leq T[\wt\Vw_\infty]$. Since $\z\in  (0,\infty)^{\imax}$, we observe that for every triple $(x,\d,\z)$ and $a\in\D(x),$ we have 
    \begin{equation}
    \begin{split}
\varepsilon'_{n}(x,\d,\z, a)&:=\int\varepsilon_n(x',\z\cdot\C(x,a,x')+\d,\z\cdot\bb)\mathrm{P}(dx' \mid x,a)\\&\leq \omega\left(\imax\overline{z} \overline{\beta}^{n+1}\frac{\bar c}{1-\overline{\beta}},\imax\left(\overline{z}\overline{\beta}\frac{\overline{c}}{1-\overline{\beta}}  + \overline{r}+\overline{z}\, \overline{c}\right)\right)\\&\leq
\omega\left(\imax\overline{z} \overline{\beta}^{n+1}\frac{\bar c}{1-\overline{\beta}},\imax\overline{z}\frac{\overline{c}}{1-\overline{\beta}} \right)=\varepsilon_{n+1}(x,\d,\z).
    \end{split}
    \end{equation}   
  Now, we get with (\ref{eqn:withinfimum})
  \begin{equation}
  \begin{split}
  T[\wt\Vw_\infty](x,\d,\z)&\leq  T[\wt\Vw_n + \varepsilon_n](x,\d,\z) \leq  T[\wt\Vw_n](x,\d,\z) +\sup_{a\in\D(x)}\varepsilon'_{n}(x,\d,\z, a)\\&\leq \wt\Vw_{n+1}(x,\d,\z)+\varepsilon_{n+1}(x,\d,\z),
  \end{split}
  \end{equation}
 but the last term converges to zero as $n \in \infty$. Therefore we have, $\wt\Vw_\infty \geq T[\wt\Vw_\infty]$, i.e. $\wt\Vw_\infty = T[\wt\Vw_\infty]$.
    
    We next show that $T^n[ \bar b] \searrow \wt\Vw_\infty$, and $T^n [\underline b] \nearrow \wt\Vw_\infty$ as $n \to \infty$. First, observe that
    \begin{align*}
    	T[ \bar b] (x, \d, \z)
    	&= \inf_{a \in D(x)} \int \Uw\left( \z\cdot \bb \cdot\frac{\bar c}{1-\bb} + \z \cdot\C(x,a,x') + \d \right) P(dx'|x,a) \\
    	&\leq \Uw \left( \z\cdot \frac{\bar c}{\mathbf{1}-\bb} + \d \right) \\
    	&\leq \bar b (\d,z),
    \end{align*}
    and the same holds true for
    $$T [\underline b](x,\d,z) \geq \underline b(\d,z).$$
    Since $T$ is increasing, the sequences $(T^n [\bar b])_n$ and $(T^n [\underline b])_n$ are pointwise monotone and bounded, and therefore their pointwise limit exists. By iteration,
    \begin{align*}
    	T^n[\Uw](x,\d,\z) &= \inf_{\pi \in \wPiX} \E_x^\pi \left[ \Uw \left( \z \cdot\sum_{k=0}^{n-1} \bb^k\cdot \C(X_k, A_k, X_{k+1}) + \d \right) \right] \\
    	T^n\left[\bar b\right](x,\d,\z) &= \inf_{\pi \in \wPiX} \E_x^\pi \left[ \Uw \left( \z\cdot \bb^n \frac{\bar c}{\mathbf{1}-\bb} + \z \cdot\sum_{k=0}^{n-1} \bb^k\cdot \C(X_k, A_k, X_{k+1}) + \d \right) \right].
    \end{align*}
   We obtain
    \begin{align*}
    	0
    	&\leq T^n[\bar b](x,\d,\z) - T^n[\underline b](x,\d,\z)\hspace{20pt} \text{ by monotonicity of} \hspace{2pt}T \hspace{2pt}\text{and} \hspace{2pt}T(0)=0\\
    	&\leq T^n[\bar b](x,\d,\z) - T^n[\Uw](x,\d,\z)  \hspace{20pt}\text{by monotonicity of} \hspace{2pt}T \hspace{2pt}\text{and} \hspace{2pt} \mathcal{U}(\d)\leq\underline{b}(x,\d,\z)\\
    	&\leq \sup_{\pi \in \Pi_{\wt \H}} \E_x^\pi \left[ \Uw\left( \z\cdot \bb^n \frac{\bar c}{\mathbf{1}-\bb} + \z \sum_{k=0}^{n-1} \bb^k\cdot\C(X_k, A_k, X_{k+1}) + \d \right)  -  \Uw\left( \z\cdot \sum_{k=0}^{n-1} \bb^k \cdot \C(X_k, A_k, X_{k+1}) + \d \right) \right]\\&= \varepsilon_n(x,\d,\z).
    \end{align*}
    For $n \to \infty$, we obtain
    $$\lim_{n \to \infty} T^n \underline [b] = \lim_{n \to \infty} T^n [\bar b]= \lim_{n \to \infty} T^n [\Uw] = \wt\Vw_\infty,$$
    uniformly on compact sets. This proves (a).
    
    \item $\wt\Vw_\infty$ is lower semi-continuous as a uniform limit on sets of the form $\X\times K\times (0,1)^{\imax},$ where $K$ is a compact subset of $\R^\imax,$  of lower semi-continuous function. As proved in Theorem \ref{thm:SumsOfCostThm}, $T^n [\bar b](x, \cdot, \cdot)$ is continuous and componentwise monotonous for all $x \in \X$. Since $T^n [\bar b] \searrow \wt\Vw_\infty$, $\wt\Vw_\infty(x,\cdot,\cdot)$ is upper semi-continuous and therefore continuous, and also preserves the same monotonicities for each $x \in \X$. We have thereby shown $\wt\Vw_\infty \in \Delta$.
    
	It remains to show the uniqueness. To that end, suppose that there is $v \in \Delta$, $v \neq \wt\Vw_\infty$ such that $v = T[v]$ with $\underline b \leq v \leq \bar b$. Then, because $T$ is increasing, $T^n [\underline b] \leq T^n [v] = v \leq T^n [\bar b]$, and with $n \in \infty$, we get $\wt\Vw_\infty \leq v \leq \wt\Vw_\infty$, i.e. $v = \wt\Vw_\infty$, a contradiction. This proves (b).
	
	\item The claim follows similar to Theorem \ref{thm:SumsOfCostThm}.
	
	\item Since $\wt\Vw_\infty(x,y,z) \geq \Uw(y)$, we obtain
	$$\wt\Vw_\infty = \lim_{n \to \infty} T^n_{g^*} [\wt\Vw_\infty] \geq \lim_{n \to \infty} T^n_{g^*} [\Uw] = \lim_{n \to \infty} \wt\Iw_n(\cdot,(g^*, g^*, \dots)) = \wt\Iw_\infty(\cdot, (g^*, g^*, \dots)) \geq \wt\Vw_\infty,$$ Hence $f^*$ is optimal for $\wt\Iw_\infty$. This finally proves (d). \qed
\end{enumerate}
\end{proof}

\section{Numerical example}
\subsection{Task design}
To illustrate our method, we present a generalized version of the repetitive Tiger problem \citep{kaelbling1998planning}. In the classic Tiger problem, a decision-maker is faced with two close doors, behind one of which is a tiger (punishment) and behind the other is a treasure (reward). In the original version, the agent can either open a door or listen to tiger sound in order to gain more information about the true place of the tiger. The sound signal however, is not fully reliable and with a smaller probability (20\%) it can be heard from the wrong door. Each time that agent opens a door, it takes the reward/punishment and the problem resets. By resetting the problem, the position of the tiger and the treasure would be determined randomly and would remain fixed for the entire next trial.\medskip

The version we will discuss here, has been generalized in three aspects: First, the constraint of deterministic state space has been relaxed and the position of the tiger can change during a trial. Here, there is a 10 \% chance to change its position at the start of each epoch.
In the second extension, the immediate reward (punishment) of opening the doors would not be observable for the agent during the repeats of the problem. Therefore, at any time before the end of the all trials, the agent can only have an estimation about its gains. At the end of all trials, the agent would observe the whole accumulated reward and punishments.  And third, the action space is expanded in a way that makes the agent able to not only choose the correct option (treasure door) but also bet on its own decision in different levels of investment. Here, we define that the agent can open each door either conservatively (low stake actions), to gain a lower amount of reward and punishment (low stake rewards: $Reward_{corr\_low}$ and $Reward_{incorr\_low}$), or rush to the doors (high stake actions), to gain a bigger reward if it is the treasure, and to take more damage if the tiger is behind the door. (high stake rewards: $Reward_{corr\_High}$ and $Reward_{incorr\_High}$). \medskip

The underlying MDP of the experiment is depicted in fig \ref{fig:fig1}. Here, the probabilities of getting observations ($O_{tiger\_right}$ and $O_{tiger\_left}$) are depend on the agent's actions and the successive new state. As mentioned before, by doing the \textit{listen} action there is a smaller probability that agent takes a false signal. Also, by doing each of \textit{open} actions (regardless of their correctness or their stake type) the MDP will be reset and a random signal, with probability of 0.5 for pointing to each state, would be received. In other words, the signal which received after the re-initializing the problem, is uninformative in purpose of detecting the tiger's position. The observation function of the experiment's POMDP is shown in table 1. \medskip

\begin{figure}
    \centering
    \includegraphics[width=0.9\textwidth]{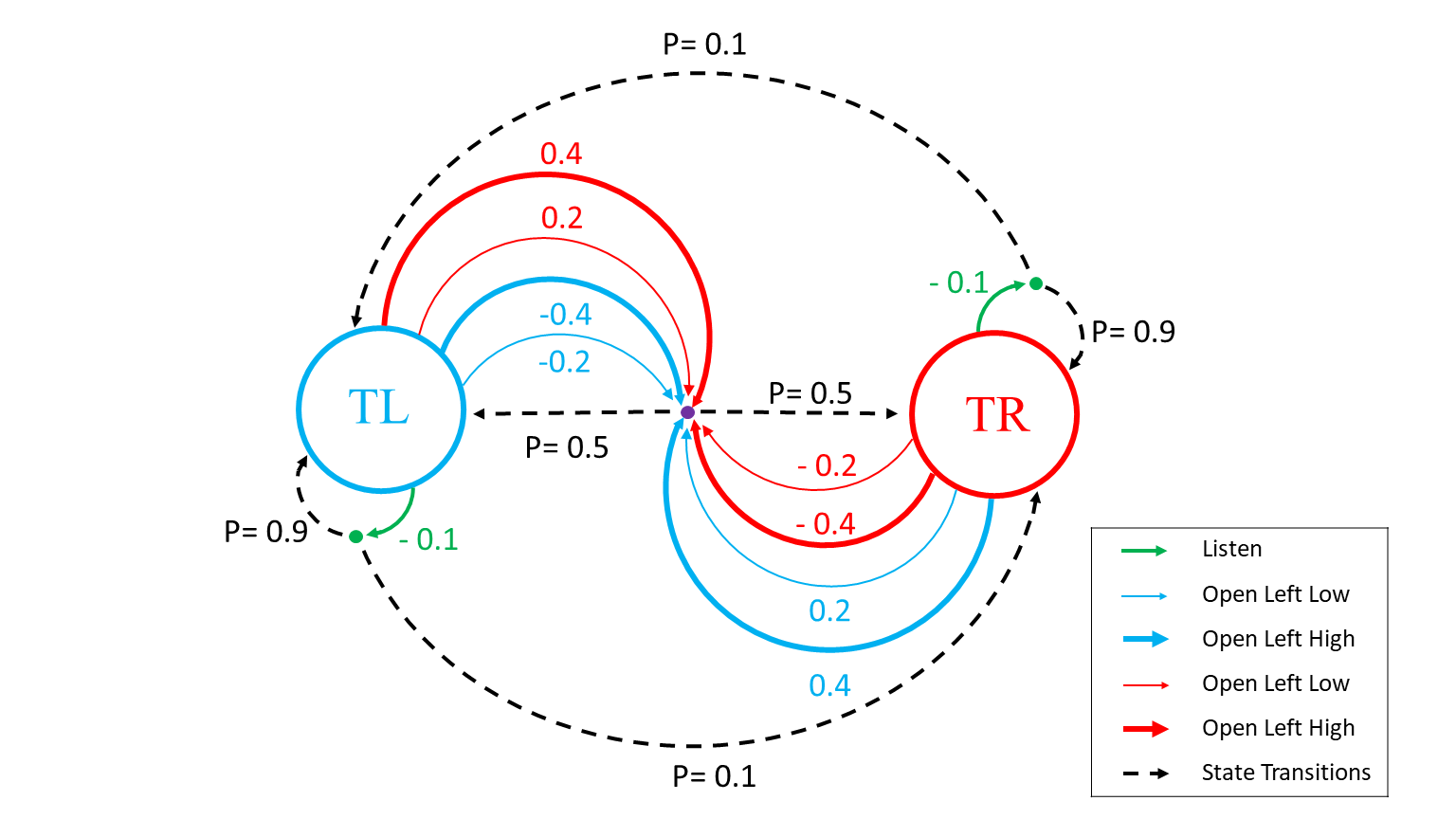}
    \caption{\textbf{ MDP of the Extended Tiger task.} The states of the MDP are Tiger\_right(TR) and Tiger\_left(TL). After each \textit{opening} action (red and blue arrows), the environment would set to a state randomly and based on either choosing tiger door or treasure door as well as choosing either high stake or low stake action (thickness of the arrows), the non-observable reward will receive by agent. By doing \textit{listen} action, the agent will pay a small cost and takes an informative signal about the position of the tiger. There is also a small chance that tiger changes its position. }
    \label{fig:fig1}
\end{figure}

\begin{figure}
    \centering
    \includegraphics[width=0.9\textwidth]{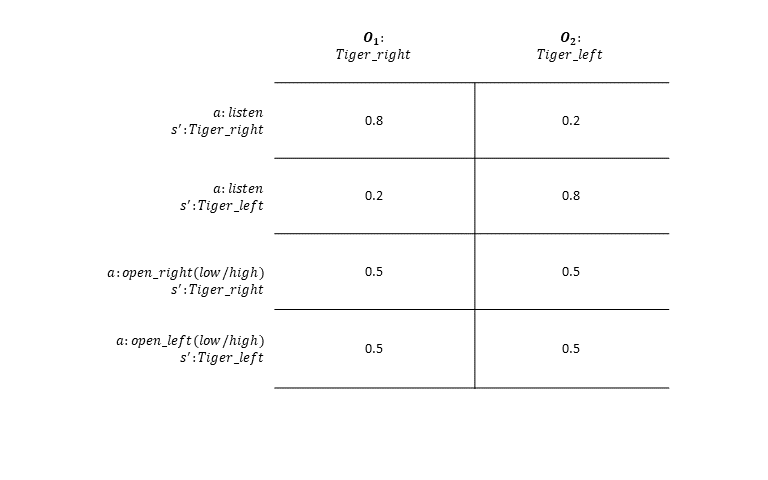}
    \caption{\textbf{Tabular representation of the Observation Function.}}
    \label{fig:table1}
\end{figure}
\subsection{Simulation results}
In order to illustrate the competency of our method in replicating different risk sensitive behaviors, in this section we have tested our method on the extended tiger task by using four different utility functions which are composed by linear combinations of of exponential functions. Three out of them are adjusted to address risk-neutral, risk-seeking and risk-aversion decision-making. It means the utility functions are (near-)linear, convex and concave in the interval of possible rewards respectively. It should be mentioned that as in our method $e^0$ is not a valid term, linear combinations cannot replicate an exact linear function ($\frac{d^2U}{dx^2} \ne 0$). However, for the sake of being more intuitive we used a utility function with an infinitesimal second derivative in the interval of rewards to show the ability of the model to mimic different patterns of risk sensitivity (fig2.a). Last but not least, we have tested our model on the task with a sigmoid utility function. As mentioned before, it is assumed that human uses S-shaped utility functions, like sigmoid function, in face with losses and gains. Therefore, regarding computational modeling of behavior, it is a crucial ability for a risk sensitive model to mimic such functions. Like the linear case, sigmoid function cannot be expressed by linear combinations of exponential functions. However, we can approximate the sigmoid in a specific interval by using a combination which contains enough numbers of exponential functions. Here, We fitted weighted sum of five exponential terms: $0.381*e^{0.2906}$, $0.404*e^{0.2876}$, $-0.427*e^{-0.0091}$,  $-0.182*e^{0.6537}$, $0.322*e^{0.2982}$. \\

\begin{figure}
    \centering
    \includegraphics[width=0.99\textwidth]{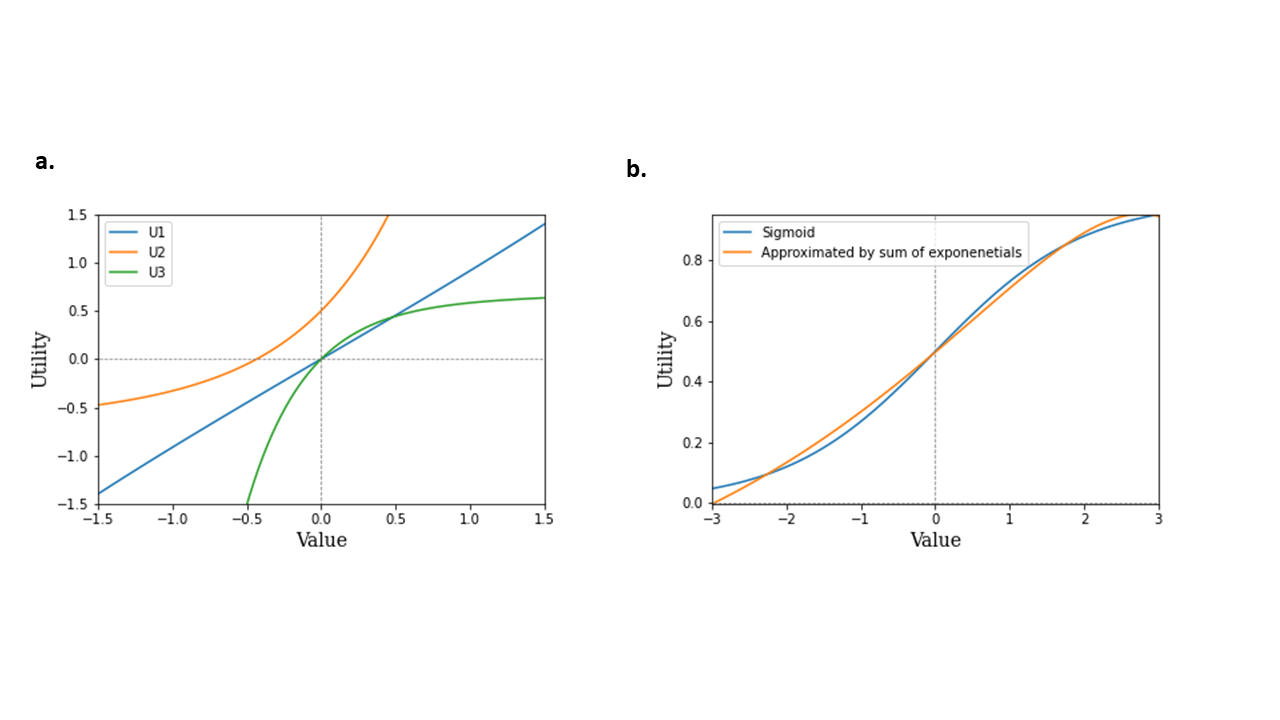}
    \caption{\textbf{Utility functions. a.} Utility functions produce risk\_neutral, risk\_seeking and risk\_aversion decisions in the showed interval respectively. risk\_neutral: $U_1= 1.5e^{0.3}-1.5e^{-0.3}$, risk\_seeker: $U_2= e^{1.5}-0.5e^{-0.1}$, risk\_avers: $U_3= 0.6e^{0.05}-0.6e^{-2.5}$. \textbf{b. }Approximation of Sigmoid function by weighted sum of five exponential functions. }
    \label{fig:utilities}
\end{figure}

The simulation results have been presented in table 2. The results clearly show the effect of utility functions' shape on the risk attitude of the simulated agent. In table 2 we only present selected actions in trials with depth of planning equal to either one or two steps. As we have used deterministic greedy policy in our simulations, choosing between different action types (listening, low stake door openings and high stake door openings) in plannings with maximum depth of one or two only depends on environment dynamics, utility function, discount factor and initial wealth. In other words, in planning with depth one or two, choosing the type of actions(and not their directions) is independent from the observations from the environment side. Therefore, we can easily fix the other dynamics and examine only the effect of utility functions. In this simulations, we have fixed the environments dynamics to the above-mentioned values, with no discounting and the initial wealth equals to zero. \medskip
\\
In the extended tiger problem, one can assume that high/low stake opening actions represent riskiness of the decisions. While the expected reward of them are equal, the deviation of outcomes in high stake cases are higher. \ref{fig:table2} shows that in the maximum planning-depth of two, risk-neutral agent prefers to gather more information (and pay its cost) in the first step, and then in the second step open the door with higher probability of being treasure in high stake mode. However, the risk averse agent($u_3$) prefers to do the second action more conservatively and open the door in the low stake mode while in contrast with them, the risk-seeker agent ($u_2$) prefers to perform risky actions in each step. The sigmoid-agent also behaves like the risk-neutral case, however it should be considered that S-shaped utilities make agents risk-averse toward positive accumulated outcomes and risk-averse in face with negative valuations of total expected rewards. In one step planning conditions, paying the certain cost of listening rather than doing a risky action with higher expected return seems irrational for all of used utility functions. However, risk-averse and risk-neutral cases prefer to choose low stake actions in a fifty-fifty situation while the risk-seeker and sigmoid agents prefer to risk more.

\begin{figure}
    \centering
    \includegraphics[width=0.99\textwidth]{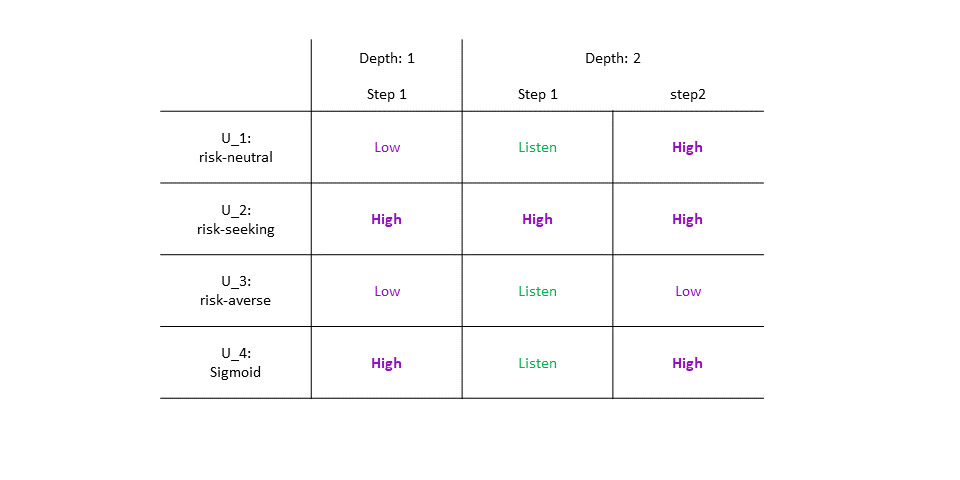}
    \caption{\textbf{Actions with best value for each step under different utility functions.}}
    \label{fig:table2}
\end{figure}

\section {Discussion}

The method we introduced works for problems which have finite set of states while using any increasing utility function. Our method calculates the exact utility values in case of weighted sum of exponential utility functions and approximate them for any other monotone function, contrasting \citet{bauerle2017a} which is more general and doesn't need to approximate values. However the resulting augmented state space in Mutlivariate utility method is $\mathcal{P}(\mathcal{X})^{i_{\max}}\times\mathcal{Y}\times\mathbb{R}^{i_{\max}}\subset \mathbb{R}^{|\mathcal{X}|\times 2 \times i_{\max}}\times\mathcal{Y}$, where $i_{\max}$ is the number of exponential functions that make up the utility function, see \eqref{sumexp}. In \citet{bauerle2017a}, the resulting state space is  $\mathcal{P}(\mathcal{X}\times\mathbb{R})$ which is an infinite dimensional space, and even in cases where the wealth space is discretized appropriately, one ends up with a dimension of  $|\mathcal{X}|\cdot$(partition size). Our method therefore has a clear computational advantage when $i_{\max}$ is small. In the general case of approximating utility function, the lower dimensionality of Multivariate method brings the trade-off between accuracy of approximation and computational tractability to attention. One can expect that by increasing the number of exponential terms in the approximated utility function, the accuracy of approximated utility values would improve (become more similar to their exact non-approximated values) but in the cost of an increase in state space dimensionality. Moreover by increasing the depth of planning, the method introduced by \citet{bauerle2017a} would also face with the trade-off between lack of accuracy and increase of state space complexity in case of using partitioned wealth-axis. Because, when the maximum depth of planning grows the possible amounts of wealth would also increase. Both mentioned accuracy/complexity trade-offs are heavily dependent on the dynamics of the problem as well as the utility function and can be subject of further studies.
Last but not least, our proposed model is eligible to apply on problems which would be defined in a multi-variate manner. In this work we only discussed the ability of the Multivariate model to address monotone utility functions in a class of problems which only have one objective (wealth), however the problems with different separate running costs like: resource allocation in different governmental sectors or maximizing the overall utility of an economic actor while she uses different diminishing marginal utility functions in different goods or aspects of life are another area that our method can address and could be investigated more in terms of computational efficiency. 
\\

\bibliographystyle{apacite}
\bibliography{main}
%\printbibliography
\end{document}